\newtheorem{theorem}{Theorem}[section]
\newtheorem{lemma}[theorem]{Lemma}
\newtheorem{prop}[theorem]{Proposition}
\newtheorem{cor}[theorem]{Corollary}
\theoremstyle{definition}
\newtheorem{example}[theorem]{Example}
\theoremstyle{remark}
\newtheorem{remark}[theorem]{Remark}
\numberwithin{equation}{section}
\def\Q{\mathbb{Q}}
\def\Z{\mathbb{Z}}
\def\C{\mathbb{C}}
\def\R{\mathbb{R}}
\def\F{\mathbb{F}}
\DeclareMathOperator{\Hom}{Hom}
\DeclareMathOperator{\Pic}{Pic}
\DeclareMathOperator{\ICM}{ICM}
\DeclareMathOperator{\rk}{Rank}
\DeclareMathOperator{\End}{End}
\DeclareMathOperator{\AV}{AV}
\DeclareMathOperator{\Tr}{Tr}
\DeclareMathOperator{\Aut}{Aut}
\newcommand{\cF}{{\mathcal F}}
\newcommand{\cI}{{\mathcal I}}
\newcommand{\cM}{{\mathcal M}}
\newcommand{\cO}{{\mathcal O}}
\newcommand{\cP}{{\mathcal P}}
\newcommand{\cQ}{{\mathcal Q}}
\newcommand{\cS}{{\mathcal S}}
\newcommand{\set}[1]{\left\lbrace#1\right\rbrace }
\newcommand{\Span}[1]{\left<#1\right>}
\newcommand{\abs}[1]{\left|#1\right|}
\newcommand{\vphi}{{\varphi}}
\newcommand{\Wpoly}[1]{{\mathcal W}({#1})}
\newcommand{\AVord}[1]{\AV^{\textit{ord}}({#1})}
\newcommand{\AVcs}[1]{\AV^{\textit{cs}}({#1})}
\newcommand{\Modord}[1]{\cM^{\textit{ord}}({#1})}
\newcommand{\Modcs}[1]{\cM^{\textit{cs}}({#1})}
\newcommand{\Mod}[1]{\cM({#1})}
\newcommand{\Fcs}{\cF^{\textit{cs}}}
\newcommand{\Ford}{\cF^{\textit{ord}}}
\newcommand{\idcl}[1]{[{#1}]}
\newcommand{\idcat}[1]{\cI({#1})}
\renewcommand{\bar}{\overline}
\newcommand{\Eq}{\cF}
\begin{document}

\begin{abstract}
We give algorithms to compute isomorphism classes of ordinary abelian varieties defined over a finite field $\F_q$ whose characteristic polynomial (of Frobenius) is square-free and of abelian varieties defined over the prime field $\F_p$ whose characteristic polynomial is square-free and does not have real roots.
In the ordinary case we are also able to compute the polarizations and the group of automorphisms (of the polarized variety) and, when the polarization is principal, the period matrix.
\end{abstract}

\author{Stefano Marseglia}
\address{Matematiska institutionen, Stockholms universitet, Sweden}
\curraddr{Mathematical Institute, Utrecht University, The Netherlands}
\email{s.marseglia@uu.nl}
\title{Computing square-free polarized abelian varieties over finite fields}

\subjclass[2010]{Primary 14K15; Secondary: 14G15, 11G10, 11G25, 14-04}

\date{} % which date should I put here ?

%\dedicatory{}

\maketitle

%\textbf{Keywords:} Abelian varieties $\cdot$ Finite Fields $\cdot$ Polarizations

\section{Introduction}

It is well known that the abelian varieties of dimension $g$ defined over the complex numbers can be functorially (and explicitly) described in terms of full lattices $L$ in $\C^g$ such that the associated complex torus $\C^g/L$ admits a Riemann form, see for example \cite{RosenAbVarC86}.

When we move to the world of positive characteristic $p$, thanks to Serre, we know that we cannot describe the whole category of abelian varieties of dimension~$g$ in terms of lattices of rank $2g$. This is due to the existence of objects such as supersingular elliptic curves whose endomorphism algebra is quaternionic and hence does not admit a $2$-dimensional representation over $\Q$.

Nevertheless, if we restrict our attention to some subcategories of the category of abelian varieties defined over a finite field we have equivalences with the category of finitely generated free $\Z$-modules with extra structure satisfying some easy-to-state axioms.
More precisely, this was proved by Deligne in \cite{Del69} for ordinary abelian varieties over a finite field $\F_q$, where $q=p^r$ is an arbitrary prime power, and by Centeleghe-Stix in \cite{CentelegheStix15} for abelian varieties over the prime field $\F_p$ whose characteristic polynomial of Frobenius does not have real roots, or equivalently, such that $\sqrt{p}$ is not an eigenvalue of the action of Frobenius on the associated $l$-adic Tate module, for any prime $l\neq p$.
Other functors (which we do not use in this paper) defined on the subcategory of powers of elliptic curves are studied in the Appendix in \cite{Lauter02}, in \cite{Kani11} and in \cite{JKPRSBT18}.

The \emph{main result} of this paper is an algorithm to compute the isomorphism classes of abelian varieties in the isogeny class determined by a \emph{square-free} characteristic polynomial $h$ of Frobenius using Deligne and Centeleghe-Stix' results.
The key point to perform this computation is that the target category of Deligne's and Centeleghe-Stix equivalences is equivalent to the category of fractional ideals of the order $\Z[F,V]$, where $F$ is a root of $h$ and $V=q/F$ in the Deligne case and $V=p/F$ in Centeleghe-Stix case.
Fractional ideals for orders that are not domains will be defined in Section \ref{sec:orders}.
The order $\Z[F,V]$ might not be maximal and so the fractional ideals might not be invertible, even in their own multiplicator ring.
In \cite{MarICM18} we describe a method to compute the isomorphism classes of such ideals and hence we are able to compute the isomorphism classes of abelian varieties in the isogeny class determined by $h$, see Algorithm \ref{alg:isoclass}.

In the ordinary case, translating the results of \cite{Howe95} into our ideal-theoretic description allows us to compute polarizations of arbitrary degree and the automorphism group of the polarized abelian varieties, see Algorithms \ref{alg:pol} and \ref{alg:autom}.

The present algorithm could be used to provide computational evidence for extending the formulas counting the number of isomorphism classes of principally polarized abelian varieties such as in \cite{AchterWilliams15} and \cite{AchterGordon17}.

We would like to stress that the shift from $\Z[F,V]$-modules to $\Z[F,V]$-fractional ideals (for certain isogeny classes) is very natural and it has already been used in the past, sometimes implicitly.
A list of papers where such a shift is applied to simple ordinary varieties includes:
the work of Howe, see for example 
\cite[Section~6]{Howe95} where the focus is on abelian varieties with maximal endomorphism ring and
\cite[Section~2]{how04} for abelian surfaces;
a paper by Lenstra, cf.~\cite[Section~6]{Lenstra96};
a paper by Lenstra, Pila and Pomerance, with focus on for abelian surfaces, cf.~\cite[Section~8]{LenstraPilaPomerance02};
a paper by Shankar and Tsimerman, mainly for geometrically simple isogeny classes, cf.~\cite[Section~3.1]{ShankarTsimerman18}.
The shift is also described using categorical language for simple ordinary abelian varieties in previous work of the author \cite{Mar16}, and in Martindale's thesis 
\cite[Sections~1.2,1.4]{MartindaleThesis}.
Results analogous to the one contained in the present paper for simple almost ordinary abelian varieties in odd characteristic can be found in Oswal and Shankar's paper \cite[Section~4]{OswalShankarEarlyView}.

The paper is structured as follows.
In Section~\ref{sec:orders} we recall the definition of fractional ideal of an order and we introduce the notion of an ideal class monoid. 
In Section~\ref{sec:categories} we describe the categories of abelian varieties and Deligne's and Centeleghe-Stix' equivalences.
In Section~\ref{sec:squarefreecase} we focus on the square-free case and prove an equivalence with the category of fractional ideals of certain orders.
%\\\textcolor{red}{gppts}\\
Such an equivalence allows us to describe the endomorphism ring, the automorphism group and the group of rational points of the abelian varieties.
In Section~\ref{sec:polarizations} we translate the notion of a polarization of an ordinary abelian variety over a finite field into the ideal-theoretic language and we describe how to compute the polarizations of a given degree up to isomorphisms. We also describe how to compute the automorphism group of the polarized abelian variety.
In Section~\ref{sec:algorithms} we present the algorithms from the previous sections and in Section \ref{sec:examples} we present the output of some computations.
Finally, in Section~\ref{sec:periodmatrices} we explain how to compute a period matrix of the canonical lift of an ordinary principally polarized abelian variety using the tools provided.
The algorithms have been implemented in Magma \cite{Magma}. 
The packages and the code to reproduce the examples contained in this paper are available at
\url{ https://github.com/stmar89/AbVarFq }.
The author is currently running a big computation of isomorphisms classes of abelian varieties over finite fields. The output will be published on \cite{LMFDB}.

\subsection*{Acknowledgments}
The author would like to thank Jonas Bergstr\"om for helpful discussions and Rachel Newton and Christophe Ritzhentaler for comments on a previous version of the paper, which is part of the author's Ph.D thesis \cite{MarPhDThesis18}.
The author would also like to express his gratitude to the Max Planck Institute for Mathematics in Bonn for their hospitality.
The author thanks the anonymous reviewer of Mathematics of Computation for useful comments and suggestions.

\subsection*{Conventions}

All rings are commutative and unital.
All morphisms between abelian varieties $A$ and $B$ over a field $k$ are also defined over $k$, unless otherwise specified.
In particular we write $\Hom(A,B)$ for $\Hom_k(A,B)$.
An abelian variety $A$ is simple if it is so over the field of definition.

\section{ Orders and Ideal classes }
\label{sec:orders}

Let $f\in \Q[x]$ be a monic square-free polynomial and denote by $K$ the \'etale $\Q$-algebra $\Q[x]/(f)$.
Note that $K$ is a finite product of number fields.
An \emph{order} in $K$ is a subring $R$ of $K$ whose underlying abelian group is isomorphic to $\Z^n$ where $n=\deg f$.
In particular, we have that $R\otimes_\Z \Q = K$ and that $K$ is the total ring of quotients of $R$.
Among all orders of $K$ there is one maximal with respect to inclusion called the \emph{maximal order of $K$}. It is a Dedekind ring and we denote it by~$\cO_K$.

A \emph{fractional $R$-ideal} is a finitely generated sub-$R$-module $I$ of $K$ such that $K=I\otimes_\Z\Q$, or, equivalently, it contains a non-zero divisor of $K$.
% Since $I$ is f.g. as an $R$-module, then there exists a $d$ (equal to the denominators of the generators) such that $dI\subseteq R$.
% If $x\in I$ is a non-zero-divisor then $(1/x)I\subseteq R$
% In particular $I$ and $R$ have the same rank as abelian groups.
Given two fractional $R$-ideals $I$ and $J$ we have that $IJ$, $I+J$, $I \cap J$ and $(I:J)=\set{x \in K : xJ\subseteq I}$ are also fractional $R$-ideals. Note that the fractional $R$-ideals are precisely the lattices in $K$ which are $R$-modules.
% It is intended that the $R$-module structure is the one induced by the multiplication in $K$.

An \emph{over-order} of $R$ is an order containing $R$.
% Equivalently, the over-orders of $R$ can be defined as the idempotent fractional $R$-ideals.

To every fractional $R$-ideal $I$
% , and actually to every lattice in $K$,
we can associate a particular order $S$, the \emph{multiplicator ring} of $I$, defined as the biggest subring of $K$ for which $I$ is an $S$-module.
Note that $S=(I:I)$ and that $S$ is an over-order of $R$.
A fractional ideal $I$ is \emph{invertible} if $I(S:I)=S$, where $S$ is its multiplicator ring.

Observe that two fractional $R$-ideals $I$ and $J$ are isomorphic as $R$-modules if and only if there exists $a\in K^\times$ such that $I=aJ$. 
Indeed, every $R$-linear morphism $\alpha\colon I\to J$ induces a unique $K$-linear endomorphism $\alpha\otimes \Q$ of $K$ which is  uniquely determined by the image of $1$, say $a$. Moreover $\alpha$ is injective if and only if $a$ is not a zero-divisor.
We will denote by $\idcat{R}$ the category of fractional $R$-ideals with $R$-linear morphisms.

The set of fractional $R$-ideals up to isomorphism is called the \emph{ideal class monoid} of $R$ and it is denoted $\ICM(R)$.
It is a multiplicative monoid under the operation induced by ideal multiplication and contains a group $\Pic(R)$ consisting of the classes of invertible $R$-ideals, with equality if and only if $R=\cO_K$. More generally, we have that 
\[\ICM(R) \supseteq \bigsqcup_S \Pic(S),\]
where the disjoint union is taken over all the over-orders $S$ of $R$.
We will write $\idcl{I}$ for the isomorphism class of the fractional $R$-ideal $I$. 

In \cite{MarICM18} we describe an algorithm that computes $\ICM(R)$.

\section{ The category of abelian varieties over a finite field }
\label{sec:categories}
Let $q=p^r$ be a power of a prime $p$. Denote with $\AV(q)$ the category of abelian varieties over $\F_q$.
For $A$ in $\AV(q)$ let $h_A$ be the characteristic polynomial of the Frobenius acting on the Tate module $T_lA$ for a prime $l \neq p$. Recall that the definition of $h_A$ does not depend on the choice of the prime $l$.
It follows from the results of Honda \cite{Honda68} and Tate \cite{Tate66} that the polynomial $h_A$ characterizes the isogeny class of $A$. Their results can be summarized as follows. Consider the following conditions for a polynomial $h$ in $\Q[x]$:
 \begin{enumerate}[(a)]
  \item \label{item1} $h$ is monic, of even degree and with integer coefficients;
  \item \label{item2} every complex root of $h$ has absolute value $ \sqrt{q} $;
  \item \label{item3} $h = m^n $, where $m$ is irreducible and $n$ is the least common denominator of the rational numbers $\set{ v_p(f(0))/r }$, where $f$ runs over the irreducible factors of $m$ over $\Q_p$ and $v_p$ is the $p$-adic valuation normalized such that  $r=v_p(q)$. If $m$ has a real root then one needs to add  $1/2$ to the set of rational numbers.
 \end{enumerate}
 Let $\Wpoly{q}$ be the set of finite products of polynomials satisfying \ref{item1}, \ref{item2} and \ref{item3}.
\begin{prop}[Honda-Tate theory, see \cite{Tate71}]
 If $A$ is an abelian variety in $\AV(q)$ then $h_A$ is in $\Wpoly{q}$.
 Conversely, for every $h$ in $\Wpoly{q}$ there exists an abelian variety $A$ in $\AV(q)$ such that $h=h_A$.
 Given two abelian varieties $A$ and $A'$ in $\AV(q)$ we have $h_A=h_{A'}$ if and only if $A$ and $A'$ are isogenous.
 Moreover, if $A$ has dimension $g$ then $h_A$ has degree $2g$.
\end{prop}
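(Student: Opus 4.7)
The plan is to decompose the proposition into the three classical ingredients due to Weil, Tate, and Honda, assembled in order of increasing difficulty. For the numerical assertions, fix a prime $\ell\neq p$: the Tate module $T_\ell A$ is free of rank $2g$ over $\Z_\ell$, so $h_A$ is automatically monic of even degree $2g$. Its coefficients are symmetric functions of the Frobenius eigenvalues and, via the Lefschetz trace formula, compute the integers $|A(\F_{q^n})|$; in particular they are independent of $\ell$ and lie in $\Z$, giving (a). For (b) I would invoke Weil's Riemann hypothesis: after choosing a polarization, the Rosati involution is a positive involution on $\End^0(A)$ under which the Frobenius $\pi$ satisfies $\pi\bar\pi=q$, and this, together with the semisimplicity of the Frobenius action, forces every complex eigenvalue of $\pi$ to have absolute value $\sqrt{q}$.

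Next, the isogeny classification, for which I would quote Tate's theorem that the natural injection
\[ \Hom(A,A')\otimes\Q_\ell \;\longrightarrow\; \Hom_{G_{\F_q}}(V_\ell A,\, V_\ell A') \]
is an isomorphism (with $V_\ell A=T_\ell A\otimes\Q_\ell$) and that these Galois representations are semisimple. Consequently $h_A=h_{A'}$ forces $V_\ell A\simeq V_\ell A'$, and the displayed isomorphism then produces an isogeny $A\to A'$; the converse is immediate since an isogeny induces an isomorphism of rational Tate modules.

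For (c), I would pass to the simple case via the isogeny decomposition. When $A$ is simple, a further theorem of Tate identifies $\End^0(A)$ as a central division algebra over $\Q(\pi)$ and gives $h_A=m_A^n$, where $m_A$ is the minimal polynomial of $\pi$ and $n^2=[\End^0(A):\Q(\pi)]$. Tate also computed the local Brauer invariants of $\End^0(A)$ at each place of $\Q(\pi)$: they vanish outside the places above $p$ and $\infty$, equal $1/2$ at each real place, and at a $p$-adic place $v$ equal $v(\pi)[\Q(\pi)_v:\Q_p]/v(q)\bmod\Z$. The integer $n$ is the common denominator of these fractions, which matches the recipe in (c) verbatim once one translates between $p$-adic places of $\Q(\pi)$ and irreducible factors of $m$ over $\Q_p$.

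The main obstacle is existence: given $h\in\Wpoly{q}$, one must exhibit $A/\F_q$ with $h_A=h$. By the isogeny classification it suffices to treat an irreducible $h=m$ together with a Weil $q$-number $\pi$ realizing a root of $m$. Honda's construction builds a CM abelian variety $\tilde A$ over a number field whose CM type is chosen so that, by the Shimura-Taniyama formula, a suitable good reduction of $\tilde A$ has Frobenius in the isogeny class of $\pi$, after possibly passing to a finite extension of $\F_q$; a descent via Tate's theorem then delivers $A$ over $\F_q$ itself. Verifying that the prescribed CM type can always be realized and that the local invariants of the resulting endomorphism algebra match those computed in the previous paragraph is the technical heart of the argument, and this is precisely where condition (c) enters.
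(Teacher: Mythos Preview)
Your sketch is a correct outline of the standard proof of Honda--Tate theory, assembling Weil's Riemann hypothesis, Tate's isogeny theorem and computation of local invariants, and Honda's existence result via CM lifting. The paper, however, does not prove this proposition at all: it is stated as a summary of the classical results, with citations to \cite{Honda68} and \cite{Tate66}, and no \texttt{proof} environment follows. So there is no approach in the paper to compare against; you have supplied substantially more than the paper does, and what you have supplied is accurate.
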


For $h$ in $\Wpoly{q}$ we will denote by $\AV(h)$ the full subcategory of $\AV(q)$ consisting of abelian varieties in the isogeny class determined by $h$.
A polynomial $h$ in $\Wpoly{q}$ will be called \emph{ordinary} if exactly half of the roots of $h$ are $p$-adic units.
An abelian variety $A$ is called ordinary if $h_A$ is ordinary, or, equivalently, if $h_A\mod p$ is not divisible by $x^{g+1}$, where $g$ is the dimension of $A$.

The main theoretical tools we will use to understand the category $\AV(q)$ are certain functors to the category of finitely generated free $\Z$-modules with some extra structure, which become equivalences when we restrict to subcategories of $\AV(q)$. More precisely, we will consider the following categories:
\begin{itemize}
 \item $\AVord{q}$: ordinary abelian varieties over $\F_q$;
 \item $\AVcs{p}$: abelian varieties $A$ over $\F_p$ such that $h_A$ has no real root.
 \item $\Modord{q}$: free finitely generated $\Z$-modules $T$ with an endomorphism $F$ such that:
       \begin{itemize}
        \item $F\otimes_\Z \Q$ acts semi-simply on $T\otimes_\Z \Q$;
        \item the characteristic polynomial $h_F$ of $F\otimes_\Z\Q$ is in $\Wpoly{q}$;
        \item $h_F$ is ordinary;
        \item there exists an endomorphism $V$ of $T$ such that $F\circ V = q$.
       \end{itemize}
 \item $\Modcs{p}$: free finitely generated $\Z$-modules $T$ with an endomorphism $F$ such that:
       \begin{itemize}
        \item $F\otimes_\Z \Q$ acts semi-simply on $T\otimes_\Z \Q$;
        \item the characteristic polynomial $h_F$ of $F\otimes_\Z\Q$ is in $\Wpoly{p}$;
        \item $h_F$ has no real roots, that is, $h_F(\pm\sqrt{p})\neq 0$;
        \item there exists an endomorphism $V$ of $T$ such that $F\circ V = p$.
       \end{itemize}       
\end{itemize}

A morphism from $(T,F)$ to $(T',F')$ for objects in $\Modord{q}$ (or in $\Modcs{p}$) is a $\Z$-linear morphism $\vphi\colon T\to T'$ such that the following diagram commutes:
\[ \xymatrix{
T \ar[d]_F \ar[r]^\vphi & T'\ar[d]^{F'}\\
T \ar[r]^\vphi          & T'} \]

\begin{theorem}
\label{thm:functorial_descriptions}
 There are equivalences of categories
 \[\Ford:\AVord{q} \to \Modord{q}\]
 and
 \[\Fcs:\AVcs{p} \to \Modcs{p},\]
 such that if
 \[ A \mapsto (T,F)\]
 then $\rk_\Z(T) = 2\dim A$ and $F$ corresponds to the Frobenius endomorphism of $A$.
\end{theorem}
\begin{proof}
 See \cite[Theorem 7]{Del69} and the covariant version of \cite[Theorem 1]{CentelegheStix15} given in \cite[7.4]{CentelegheStix15}.
\end{proof}

\begin{remark}
\label{rmk:Delfun}
 Let $A$ be in $\AVord{q}$. We will recall the construction of $\Ford(A)=(T,F)$ given in \cite{Del69} since it will be used later in computing the polarizations.
 Denote by $W$ the ring of Witt vectors over $\F_q$.
 Since $A$ is ordinary it admits a \emph{canonical lift} to an abelian variety $\tilde A$ over $W$, characterized by $\End_{\bar\F_q}(A) = \End_{W}(\tilde A)$.
 Fix an embedding $\varepsilon\colon W\hookrightarrow \C$ and define $A'= \tilde A \otimes_\varepsilon \C$.
 Finally put $T=H_1(A',\Z)$. Note that this construction is functorial in $A$ and in particular $T$ comes equipped with an endomorphism $F$ corresponding to the Frobenius of $A$.
\end{remark}
\begin{remark}
The construction of the functor $\Fcs$ depend also on a choice, see \cite[Section~7.3]{CentelegheStix15}.
For any choice of embedding $\varepsilon\colon W\hookrightarrow \C$ the functor $\Fcs$ can be constructed in a way that extends $\Ford$ on $\AVord{p}$.
See  \cite[Proposition~45]{CentelegheStix15}.
\end{remark}

As Serre has pointed out, functorial descriptions such as the ones in Theorem \ref{thm:functorial_descriptions} cannot be extended to the whole category of abelian varieties. This is a consequence of the existence of objects like supersingular elliptic curves, whose endomorphism algebra is a quaternionic algebra which does not admit a $2$-dimensional representation.

\section{ The square-free case }
\label{sec:squarefreecase}
In this section $h$ will be either a \emph{square-free} ordinary polynomial in $\Wpoly{q}$ or a \emph{square-free} polynomial in $\Wpoly{p}$ with no real roots.
We will denote with $\Mod{h}$ the image of $\AV(h)$ under the functor $\Ford$ (or $\Fcs$, respectively).

\begin{remark}
 Let $A$ an abelian variety in $\AV(h)$. The Poincar\'e reducibility theorem states that there are simple and pairwise non-isogenous abelian varieties $B_1,
 \ldots, B_r$ and an isogeny such that 
 \[ A \sim B_1\times \ldots \times B_r.\]
 In particular $h=\prod_i h_{B_i}$ and, since $h$ is square-free, it follows that each $h_{B_i}$ is irreducible. 
 Observe that the converse holds in both cases of interest to us: the characteristic polynomial of a simple abelian variety $B$ is irreducible, hence equal to the minimal polynomial of the Frobenius, if $B$ is in $\AVord{q}$, see \cite[Theorem~3.3]{Howe95}, or in $\AVcs{p}$, because in the condition~\ref{item3} stated at the beginning of Section~\ref{sec:categories} all denominators are equal to $1$.
 
 One observes that the proportion of square-free polynomials among non-ordinary $p$-Weil polynomials is smaller than the proportion of square-free polynomials among ordinary $p$-Weil.
 Nevertheless it accounts for the vast majority of them. For example by looking at \cite{LMFDB} one sees that among the $105600$ ordinary isogeny classes of abelian fourfolds over $\F_5$ exactly $104746$ are square-free. Among the $27239$ non-ordinary isogeny classes of abelian fourfolds over $\F_5$ we have $26765$ which are square-free.
\end{remark}
\begin{remark}
 Note that being square-free is not a geometric condition, in the sense that in general it is not stable under extensions of the base field.
 For example, if $A$ is an abelian surface over $\F_{31}$ with characteristic polynomial
 \[ h_A= (x^2 - 3x + 31)(x^2 + 3x + 31) \]
 then $A$ is isogenous to the product of two non-isogenous elliptic curves $E_1$ and~$E_2$. On the other hand $E_1$ and $E_2$ become isogenous over $\F_{31^2}$ and indeed the characteristic polynomial of $A':=A\otimes \F_{31^2}$ is
 \[ h_{A'}=(x^2 + 53x + 961)^2.\]
\end{remark}

Denote with $K$ the \'etale algebra $\Q[x]/(h)$. Put $\alpha = x \mod (h)$.
Let $R$ be the order in $K$ generated by $\alpha$ and $q/\alpha$.
Observe that our order $R$ is the order $R_w$ defined in \cite[Section 2]{CentelegheStix15} for the Weil support $w$ identified by the polynomial $h$ and, similarly, $R$ equals the order $R_\C$ defined in \cite{Howe95}. 
Recall that $\idcat{R}$ denotes the category of fractional $R$-ideals.

\begin{theorem}
\label{thm:isomclass}
 There is an equivalence of categories $\Psi\colon\Mod{h}\to\idcat{R}$.
\end{theorem}
\begin{proof}
  Let $g$ be the dimension of any abelian variety in $\AV(h)$, or equivalently let~$2g$ be the degree of $h$. Pick an object $(T,F)$ in $\Mod{h}$.
  Note that by definition~$T$ is a $\Z[F,V]$-module.
  Since $h$ is square-free it is the minimal polynomial of $F$ and hence the morphism $F\mapsto \alpha$ induces an isomorphism $\Z[F,V]\simeq R$ and hence an $R$-module structure on $T$.
  Since $T$ is torsion-free it can be embedded in $R\otimes_\Z \Q = K$ and hence it can be identified with a sub-$R$-module $I$ of $K$.
  Since $T$ is an abelian group of rank $2g$ it follows that $I$ is a fractional $R$-ideal, hence an object of $\idcat{R}$.
  Denote this association $(T,F)\mapsto I$ by $\Psi$.
  Observe that $\Psi$ is a functor.
  Indeed if $\vphi\colon(T,F)\to (T',F')$ is a morphism in $\Mod{h}$, then the compatibility rule $\vphi\circ F=F' \circ \vphi$ implies that $\Psi(\vphi)$ will be an $R$-linear morphism, as required, and that it respects composition and that it sends the identity morphism to the identity morphism.
  By construction it is clear that $\Psi$ is fully faithful and essentially surjective, hence an equivalence of categories.
\end{proof}

\begin{cor}
\label{cor:isomclass}
 If $h$ is ordinary or if $h$ is over $\F_p$ with no real roots then there is an equivalence of categories
 \[ \Eq\colon\AV(h)\to \idcat{R}. \]
 In particular, $\Eq$ induces a bijection
 \[ \dfrac{\AV(h)}{ \simeq } \longrightarrow \ICM(R). \]
\end{cor}
\begin{proof}
 The functor $\Eq$ is the composition of the functor $\Ford$ (or $\Fcs$) from Theorem~\ref{thm:functorial_descriptions} together with the functor $\Psi$ from Theorem~\ref{thm:isomclass}, which are all equivalences.
\end{proof}

\begin{remark}
  Theorem \ref{thm:isomclass} and Corollary~\ref{cor:isomclass} tell us that the abelian varieties in the isogeny class $\AV(h)$ correspond to the different $\Z[x,y]/(h(x),xy-q)$-structures that one can put on $\Z^{2g}$. 
\end{remark}

\begin{cor}
\label{cor:prod}
 If $\Eq(A)=I$, then 
 \begin{enumerate}[(a)]
    \item \label{a:cor:prod} $\Eq(\End(A))=(I:I)$;
    \item \label{b:cor:prod} $\Eq(\Aut(A))=(I:I)^\times$;
    \item \label{c:cor:prod} $A$ is isomorphic to a product of abelian varieties if and only if $(I:I)$ is a product of orders.
 \end{enumerate}
\end{cor}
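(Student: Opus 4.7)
The strategy is to transport everything through the (anti-)equivalence $\Eq$ from Corollary \ref{cor:isomclass} into the category $\idcat{R}$, where fractional ideals and their morphisms are explicit. For part (a), I would first note that an (anti-)equivalence induces a ring (anti-)isomorphism on endomorphism rings, which in our commutative setting is simply a ring isomorphism; it therefore suffices to compute $\End_{\idcat{R}}(I)$. As recalled in Section \ref{sec:orders}, any $R$-linear endomorphism $\alpha : I \to I$ extends uniquely to a $K$-linear endomorphism of $K = I \otimes_\Z \Q$ and is consequently multiplication by $a := \alpha(1) \in K$. The condition $\alpha(I) \subseteq I$ then becomes $aI \subseteq I$, i.e.\ $a \in (I:I)$, giving $\End_{\idcat{R}}(I) = (I:I)$ and hence (a).

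Part (b) is an immediate consequence of (a): automorphisms are the invertible endomorphisms, and an (anti-)equivalence preserves invertibility, so $\Aut(A)$ corresponds under $\Eq$ to $\End_{\idcat{R}}(I)^\times = (I:I)^\times$.

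For part (c), the direction $(\Rightarrow)$ exploits the square-freeness of $h$. If $A \simeq A_1 \times A_2$ with $A_1, A_2$ nontrivial, then the factorization $h = h_{A_1}\, h_{A_2}$ together with square-freeness forces $\gcd(h_{A_1}, h_{A_2}) = 1$, so $A_1$ and $A_2$ share no simple isogeny factor. Hence $\Hom(A_1, A_2) = \Hom(A_2, A_1) = 0$, and $\End(A) = \End(A_1) \times \End(A_2)$ as rings; applying $\Eq$ and invoking (a) splits $(I:I)$ correspondingly, and an obvious induction handles more than two factors. For the converse $(\Leftarrow)$, a nontrivial product decomposition $(I:I) = S_1 \times S_2$ produces a nontrivial idempotent $e \in (I:I) = \Eq(\End(A))$, which under the (anti-)equivalence corresponds to a nontrivial idempotent $\pi \in \End(A)$; setting $A_1 = \mathrm{im}(\pi)$ and $A_2 = \ker(\pi)$ yields $A \simeq A_1 \times A_2$.

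The main delicate point is the converse in (c): one must know that an idempotent in $\End(A)$ splits $A$ as an honest product of abelian varieties and not merely up to isogeny. This is a standard fact in the theory of abelian varieties, following from Poincar\'e reducibility together with the observation that the idempotent determines the splitting at the integral level, but it is the single ingredient of the proof that cannot be carried out purely inside $\idcat{R}$.
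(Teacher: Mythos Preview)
Your proof is correct, but for part \ref{c:cor:prod} you take a genuinely different route from the paper, and your closing remark slightly undersells what can be done on the ideal side.

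The paper argues entirely in the module/ideal world: it observes that, by functoriality of the ambient equivalence $\Ford$ (or $\Fcs$), the variety $A$ decomposes as a product if and only if the corresponding module $I$ decomposes as a direct sum, and then shows the elementary fact that $I=I_1\oplus I_2$ if and only if $S=(I:I)$ splits as $S_1\oplus S_2$: given idempotents $e_1,e_2\in S$ one sets $I_i=e_iI$, and conversely $(I_1\oplus I_2:I_1\oplus I_2)=(I_1:I_1)\oplus(I_2:I_2)$. So the paper never needs to invoke any structural fact about abelian varieties beyond the equivalence itself.

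Your argument, by contrast, works partly on the abelian-variety side. For $(\Rightarrow)$ you use square-freeness of $h$ to force $\Hom(A_1,A_2)=0$ and split $\End(A)$ directly; for $(\Leftarrow)$ you pull the idempotent back to $\End(A)$ and quote that an idempotent endomorphism splits an abelian variety as a product. Both steps are valid, and your $(\Rightarrow)$ makes the role of square-freeness pleasantly explicit. However, your claim that the idempotent-splitting step ``cannot be carried out purely inside $\idcat{R}$'' is not quite right: that is precisely what the paper does, by splitting $I$ rather than $A$ and letting the equivalence carry the decomposition back. The gain of the paper's approach is that it avoids any appeal to facts about images and kernels in the (non-abelian) category of abelian varieties; the gain of yours is that it never leaves $\AV(h)$ to talk about the summands $I_i$, which are not themselves objects of $\idcat{R}$.
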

\begin{proof}
   Observe that \ref{a:cor:prod} and \ref{b:cor:prod} follow immediately from the previous proposition. Statement \ref{c:cor:prod} holds by functoriality and the fact that an ideal $I$ admits a decomposition $I_1\oplus I_2$ if and only if the same holds for its multiplicator ring. Indeed, let $S=(I:I)$. If $S=S_1\oplus S_2$, denote with $e_1$ and $e_2$ the units of $S_1$ and $S_2$, respectively, then $I=I_1\oplus I_2$ where $I_i=e_iI$.
   The other implication follows from the fact that if $I=I_1\oplus I_2$ then $(I:I)=(I_1:I_1)\oplus(I_2:I_2)$.
\end{proof}

%\textcolor{red}{gppts}\\
Recall that for a fractional $R$-ideal $J$ the trace dual is defined as
\[J^t=\set{ z \in K : \Tr_{K/\Q}(zJ)\subseteq \Z},\]
which is also a fractional $R$-ideal, with the same multiplicator ring as that of $J$.
Moreover, if $J=\alpha_1\Z\oplus \ldots \oplus \alpha_n \Z$, then
$J^t = \alpha_1^*\Z\oplus \ldots \oplus \alpha_n^* \Z$, where the $\alpha^*_j$'s are uniquely defined by the relations $\Tr_{K/\Q}(\alpha_i\alpha_j^*)=1$ if $i=j$ and $0$ otherwise.
\begin{cor}
\label{cor:grp_pts}
If $\Eq(A)=I$, then there is an isomorphism
\[  A(\F_q) \simeq \frac{I}{(1-F)I}. \]
\end{cor}
\begin{proof}
Observe that $A(\F_q)$ is the kernel of $1-\pi_A$, where $\pi_A$ is the Frobenius endomorphism of $A$.
Moreover, notice that the action of $F$ on $I/(1-F)I$ is invertible.
Now, if $A$ is ordinary, the statement follows from
\cite[Lemma~4.13, Proposition~4.14]{Howe95}.
% \cite[Section 4]{Del69}.

If $A \in \AVcs{p}$ we will obtain the result by looking at the $\ell$-primary parts for every prime $\ell$ diving the number of $\F_p$-points of $A$.
For a fractional $R$-ideal $J$ put $J_\ell=J\otimes_\Z \Z_\ell$.
Also denote by $N_\ell$ the $\ell$-primary part of $A(\F_p)$.

By \cite[Propositions~21 and~28]{CentelegheStix15} we have $T_p(A) \simeq I \otimes R_p \simeq I_p$ and hence
\[ N_p \simeq \frac{I_p}{(1-F)I_p}. \]

For a prime $\ell \neq p$ by \cite[Propositions~21 and~27]{CentelegheStix15} we have an isomorphism
$ T_\ell(A) \simeq \Hom_{R_\ell}(I_\ell,R_\ell) $ and hence
\[ N_\ell \simeq \frac{(R_\ell:I_\ell)}{(1-F)(R_\ell:I_\ell)}. \]
%\textcolor{red}{I need to move the definition of trace dual before this proof}\\
Recall that $R_\ell$ is Gorenstein if and only if $R^t_\ell$ is principal.
Hence we have that $(R_\ell:I_\ell) =(R^t_\ell I)^t \simeq I_\ell^t$.
Also, recall that for fractional ideals $J_1$ and $J_2$ we have an isomorphism of abelian groups $J_1^t/J_2^t\simeq J_2/J_1$.
 Hence we obtain isomorphisms of abelian groups
\[ \frac{(R_\ell:I_\ell)}{(1-F)(R_\ell:I_\ell)} \simeq \frac{I_\ell^t}{(1-F)I_\ell^t} \simeq
\frac{\frac{1}{(1-F)}I_\ell}{I_\ell} \simeq
 \frac{I_\ell}{(1-F)I_\ell}, \]
which concludes the proof at $\ell\neq p$.
\end{proof}

\section{Polarizations and automorphisms in $\AVord{q}$}
\label{sec:polarizations}

Let $h$ be an ordinary square-free polynomial in $\Wpoly{q}$ and define $K$ and $R$ as above.
Observe that $K$ is a CM-algebra with involution defined by $\overline{\alpha}=q/\alpha$.
Note that $\bar{R}=R$.
\begin{lemma}
Let $I$ be a fractional $R$-ideal. Then
\[(\bar{I})^t = \bar{(I^t)}. \]
\end{lemma}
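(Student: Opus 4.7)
The plan is to reduce the identity to the single fact that the trace $\Tr_{K/\Q}$ is invariant under the involution $\bar{\phantom{a}}$, after which the statement falls out by unwinding definitions.

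First I would verify that $\Tr_{K/\Q}(\bar z) = \Tr_{K/\Q}(z)$ for every $z\in K$. Since $K = \Q[x]/(h)$ is a CM-algebra, its embeddings into $\C$ come in conjugate pairs corresponding to the factorization of each complex root $\alpha_i$ of $h$ with its Weil-paired partner $q/\alpha_i$; the involution $\alpha\mapsto q/\alpha$ permutes these embeddings, so the sum of images (i.e.\ the trace) is preserved. This is the one ingredient I would want to state carefully, but it requires no real computation.

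Next I would unfold definitions. By definition,
\[(\bar{I})^t = \{\, z\in K : \Tr_{K/\Q}(z\bar{I})\subseteq \Z\,\}.\]
Writing $z = \bar w$ with $w\in K$ (the involution is a bijection of $K$, so every $z$ has this form), the condition $\Tr_{K/\Q}(\bar w\cdot \bar I)\subseteq \Z$ becomes $\Tr_{K/\Q}(\overline{wI})\subseteq \Z$ because $\bar{\phantom{a}}$ is a ring automorphism. By trace invariance this is equivalent to $\Tr_{K/\Q}(wI)\subseteq \Z$, i.e.\ to $w\in I^t$, i.e.\ to $z=\bar w\in \overline{I^t}$. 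This chain of equivalences gives the desired equality of sets.

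I do not expect any real obstacle: the only slightly subtle point is the trace-invariance of $\bar{\phantom{a}}$, which for a product of CM-fields with the Frobenius/Verschiebung involution is standard. Everything else is purely formal manipulation of the definition of the trace dual, and no ideal-theoretic properties of $I$ (beyond being an $R$-submodule of $K$) are used — in particular the statement would hold for arbitrary full $\Z$-lattices in $K$.
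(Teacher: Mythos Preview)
Your proposal is correct and follows essentially the same approach as the paper: both proofs reduce the equality to the invariance of $\Tr_{K/\Q}$ under the involution and then conclude by a short chain of equivalences on the defining condition of the trace dual. The only cosmetic difference is in the justification of $\Tr_{K/\Q}(\bar z)=\Tr_{K/\Q}(z)$: the paper observes that $z$ and $\bar z$ share the same minimal polynomial over $\Q$, whereas you argue via the pairing of the complex embeddings of the CM-algebra $K$.
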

\begin{proof}
 For $z\in K$, let $m_z$ be its minimal polynomial over $\Q$.
 Observe that $m_{\bar{z}} = m_{z}$ and in particular $\Tr_{K/\Q}(z) = \Tr_{K/\Q}(\bar{z})$.
 It follows that
 \begin{equation*}
 \begin{split}
 a\in (\bar{I})^t & \Longleftrightarrow \Tr_{K/\Q}(a\bar{i})\in \Z \text{ for every }i\in I  
 \Longleftrightarrow \\
		  & \Longleftrightarrow \Tr_{K/\Q}(\bar{a}i)\in \Z \text{ for every }i\in I \Longleftrightarrow a\in \bar{(I^t)},
 \end{split}   
 \end{equation*}
 which concludes the proof.
\end{proof}
In this section we describe how to compute the dual abelian variety, polarizations and automorphisms of a polarized abelian variety in $\AV(h)$.

\begin{theorem}
\label{thm:dual}
   Let $A$ be an abelian variety in $\AV(h)$ and $I=\Eq(A)$ be the corresponding ideal in $\idcat{R}$, where $\Eq$ is the functor of Corollary \ref{cor:isomclass}.
   Then $\bar{I}^t=\Eq(A^\vee)$, where $A^\vee$ denotes the dual abelian variety of $A$.
   Moreover, if 
   \[\Eq(\lambda\colon A\to B) = I \overset{\dot a}{\to} J\]
   then
   \[\Eq(\lambda^\vee \colon B^\vee \to A^\vee)=\bar J^t \overset{\dot {\bar a}}{\to} \bar I^t,\]
   where $\lambda^\vee$ is the morphism dual to $\lambda$ and $\dot a$ (resp.~$\dot {\bar a}$) denotes the $R$-linear morphism multiplication-by-$a$ (resp.~multiplication-by-$\bar a$).
\end{theorem}
\begin{proof}
 Let $(T,F)=(T_A,F_A)$ be the module in $\Mod{h}$ corresponding to $A$. By~\cite[Proposition~4.5]{Howe95} the dual abelian variety $A^\vee$ corresponds to 
 \[ (T_{A^\vee},F_{A^\vee})=(T^\vee,F^\vee),\]
 where $T^\vee = \Hom_\Z(T,\Z)$ and $F^\vee(\psi) = \psi \circ V$, for every $\psi\in T^\vee$. Let $n$ be the degree of $h$.
 Fix a $\Z$-basis $\alpha_1,\ldots,\alpha_n$ of $I$ and consider the $\Z$-linear maps:
 \begin{align*}
  &\Hom_\Z(I,\Z)\longrightarrow \bar{I}^t & &\text{and} & & \bar{I}^t\longrightarrow \Hom_\Z(I,\Z)\\
  &\psi\longmapsto\sum_{i=1}^n \psi(\alpha_i)\bar{\alpha_i^*} & & & & z\longmapsto \Tr_{K/\Q}(\bar{z}\cdot -)
 \end{align*}
 These maps are clearly inverses of each other and hence 
 we have that 
 \[\Psi((T^\vee,F^\vee))=\bar{I}^t,\]
 where $\Psi$ is the functor defined in the proof of Theorem~\ref{thm:isomclass}, or  equivalently, that $\Eq(A^\vee)=\bar I^t$.
 The second statement follows in an analogous manner.
\end{proof}

A morphism $\lambda: (T,F)\to (T',F')$ in $\Modord{q}$ corresponds to an isogeny if the induced linear map $\lambda \otimes \Q$ is invertible, see \cite[Section 4, p.2368]{Howe95}.
In particular, if $\lambda:A\to B$ is a morphism in $\AV(h)$ then it is an isogeny if and only if $\Eq(\lambda)=a$ is not a zero-divisor, that is $a\in K^\times$.

An isogeny $\lambda:(T,F) \to (T^\vee,F^\vee)$ defines a bilinear map $b:T\times T \to \Z$ by $b(s,t)=\lambda(t)(s)$. For such $b$, by \cite[Theorem 1.7.4.1,p.44]{Knus91}, there exists a unique $R$-sesquilinear form $S$ on $T\otimes \Q$ such that $b=\Tr_{K/\Q} \circ S$.

Since $K$ is a CM-algebra, homomorphisms $K\to \C$ come in conjugate pairs.
A \emph{CM-type} of $K$ is a choice of pairwise non-conjugate morphisms $\vphi_1,\ldots,\vphi_g : K \to \C$, where $2g=\dim_\Q K$.
Consider the set
\begin{equation}
  \Phi:=\set{ \vphi:K\to \C : v_p(\vphi(F))>0 },
  \label{eq:cmtype}
\end{equation}
where $v_p$ is the $p$-adic valuation induced by the embedding $\varepsilon\colon W\to \C$ as in Remark~\ref{rmk:Delfun}. Note that $\Phi$ is a CM-type of $K$ since the polynomial $h$ is ordinary.

An element $a\in K$ is called \emph{totally imaginary} if $a=-\bar a$, or equivalently, if $\psi(a)$ is totally imaginary for every $\psi:K\to \C$. Such an element is said to be $\Phi$-positive (resp.~non-positive) if $\Im(\vphi(a))>0$ (resp.~$\Im(\vphi(a)) \leq 0$) for every $\vphi$ in $\Phi$.

\begin{prop}[{\cite[Proposition 4.9]{Howe95}}]
\label{prop:howepol}
An isogeny $\lambda:(T,F)\to (T^\vee,F^\vee)$ corresponds to a polarization if and only if
\begin{itemize}
 \item $S$ is a skew-Hermitian form, that is $S(t_1,t_2)=-\bar{S(t_2,t_1)}$ for every $t_1,t_2 \in T\otimes \Q$, and
 \item $S(t,t)$ is $\Phi$-non-positive for every $t\in T\otimes \Q$.
\end{itemize}
\end{prop}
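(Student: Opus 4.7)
The plan is to reduce the statement to the classical complex-analytic characterization of polarizations via Riemann forms, using the canonical lift described in Remark \ref{rmk:Delfun}. Namely, the ordinary abelian variety $A$ lifts canonically to $\tilde A$ over $W$, and $A' = \tilde A \otimes_\varepsilon \C$ is a complex abelian variety with $T = H_1(A', \Z)$. Since $\End_{\F_q}(A) = \End_W(\tilde A)$ and polarizations lift as well, an isogeny $\lambda : A \to A^\vee$ is a polarization if and only if the corresponding isogeny of $A'$ is a polarization, so the problem reduces to describing, in terms of $S$, when the bilinear form $b(s,t) = \lambda(t)(s)$ on $T$ is a Riemann form for the complex structure of $A'$.

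Next I would recall the classical criterion: the form $b$ is a Riemann form if and only if
\begin{enumerate}[(i)]
\item $b$ is $\Z$-valued and alternating, and
\item the $\R$-linear extension $b_\R$ on $T \otimes \R$ satisfies $b_\R(Jx, Jy) = b_\R(x,y)$ and $b_\R(x, Jx) \geq 0$ for all $x$,
\end{enumerate}
where $J$ is the complex structure on $T \otimes \R$ coming from $A'$. The key observation is that $T \otimes \R$ inherits a $K \otimes_\Q \R$-module structure, and under the decomposition $K \otimes_\Q \R \cong \C^g$ dictated by the CM-type $\Phi$ of \eqref{eq:cmtype}, the complex structure $J$ is multiplication by the element $j \in K \otimes_\Q \R$ whose image under each $\varphi \in \Phi$ is $i$. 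This identification is precisely where the choice of $\Phi$ via the $p$-adic valuation, made possible because $A$ is ordinary (hence has a canonical lift), enters.

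With this set up, the translation from conditions on $b$ to conditions on $S$ becomes straightforward. Since $\Tr_{K/\Q}(\bar z) = \Tr_{K/\Q}(z)$, and using that $S$ is $R$-sesquilinear, the condition that $b$ is alternating translates exactly into $S(t_1,t_2) = -\overline{S(t_2,t_1)}$, i.e.\ skew-Hermitian; and in turn this skew-Hermitian property automatically gives the $J$-invariance $b_\R(Jx, Jy) = b_\R(x,y)$, because $j \bar j = 1$ in each factor of $K \otimes_\Q \R$ and $S$ is sesquilinear with respect to the involution. Finally, evaluating on the diagonal and using $\Tr_{K/\Q}(z) = 2\sum_{\varphi \in \Phi} \Re(\varphi(z))$ on totally imaginary elements, the positivity $b_\R(x, Jx) \geq 0$ reduces, componentwise in the CM decomposition, to $\Im(\varphi(S(t,t))) \leq 0$ for each $\varphi \in \Phi$; that is, $S(t,t)$ is $\Phi$-non-positive.

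The main obstacle is sign-tracking: one must choose the identification of $J$ with an element of $K \otimes_\Q \R$ consistently with how the CM-type $\Phi$ was fixed via the $p$-adic valuation, and then verify that the positivity of the Riemann form gives $\Phi$-non-positivity (rather than $\Phi$-non-negativity) of $S(t,t)$. Once the compatibility between the $p$-adic description of $\Phi$ and the complex structure $J$ on the canonical lift is pinned down, everything else is a formal manipulation of the sesquilinear form identity $b = \Tr_{K/\Q} \circ S$.
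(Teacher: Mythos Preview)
The paper does not give its own proof of this proposition; it is quoted verbatim as \cite[Proposition 4.9]{Howe95} and used as a black box. Your sketch is essentially the argument Howe gives: pass through Deligne's functor to the canonical lift $A'$ over $\C$, use that polarizations of $A$ correspond to polarizations of $A'$, and then translate the classical Riemann form conditions into conditions on the sesquilinear form $S$ via $b=\Tr_{K/\Q}\circ S$ and the identification of the complex structure on $T\otimes\R$ with the CM-type $\Phi$.

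One point you pass over a bit quickly is the assertion that an isogeny $\lambda:A\to A^\vee$ is a polarization if and only if the induced isogeny on $A'$ is. This needs the compatibility of Deligne's functor with duals and with the notion of polarization (ampleness is preserved under lifting and under base change along $\varepsilon$); Howe checks this in \cite[\S4]{Howe95}, and it is the substantive input beyond linear algebra. Once that is granted, the remaining manipulations you describe---skew-Hermitianity from alternation, $J$-invariance from $j\bar j=1$, and the sign bookkeeping yielding $\Phi$-non-positivity---are exactly as in Howe and are correct.
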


\begin{theorem}
\label{thm:idpol}
 Let $h$ be a square-free ordinary polynomial in $\Wpoly{q}$ and let $A$ be an abelian variety in $\AV(h)$. Define $R$ and $K$ as above and put $I=\Eq(A)$. Then:
 \begin{enumerate}[(a)]
  \item \label{thm:idpol:a} given an isogeny $\lambda\colon A \to A^\vee$ put $a=\Eq(\lambda)$. Then $\lambda$ is a polarization if and only if $a$ satisfies:
  \begin{itemize}
   \item $aI\subseteq \bar{I}^t$,
   \item $a$ is totally imaginary, and
   \item $a$ is $\Phi$-positive.
  \end{itemize}
  Moreover, we have $\deg \lambda = [ \bar{I}^t : aI ]$.
  \item \label{thm:idpol:b} given two polarizations $\lambda$ and $\lambda'$ of $A$, there is an isomorphism $(A,\lambda)\simeq (A,\lambda')$ if and only if there exists $v\in (I:I)^\times$ such that
  \[ a = \bar{v} a' v, \]
  where $a=\Eq(\lambda)$ and $a'=\Eq(\lambda')$.
  In particular, we have
  \[ \Aut((A,\lambda)) = (I:I)^\times \cap \mu_K, \]
  where $\mu_K$ is the group of torsion units of $K$.
 \end{enumerate}
\end{theorem}
\begin{proof}
\begin{enumerate}[(a)]
 \item Let $T$ be the module associated to $A$. Let $b\colon T \times T \to \Z$ and $S\colon T_\Q\times T_\Q \to K$ be the forms associated to the polarization $\lambda$. We will use the same letters to denote the forms induced after applying the functor $\Eq$.
 Using Theorem \ref{thm:dual} we see that
 \[ b(s,t)=\Tr( \bar{at}s ), \]
 which implies that 
 \[ S(s,t)= \bar{at}s. \]
 So by Proposition \ref{prop:howepol} we have that $a$ corresponds to a polarization if and only if $a=-\bar{a}$ and $\bar{a}$ is $\Phi$-non-positive.
%  Since $a$ is totally imaginary and in $K^\times$ we have that the last condition is equivalent for $a$ to be $\Phi$-positive.
 For the statement about the degree, see \cite[Section 4]{how04}.
 \item The element $v$ must be an automorphism of $I$, hence must be a unit of the multiplicator ring $(I:I)$.
 The diagram
 \[
  \xymatrix{
  I \ar[d]_v \ar[r]^a 	& \bar{I}^t \\
  I \ar[r]^{a'} 	& \bar{I}^t \ar[u]_{\bar{v}}
  }
 \]
 must commute, i.e.~ $a=\bar v a'v$.
 In particular, if $a'=a$ then, $a$ being a non-zero divisor (since it corresponds to an isogeny), this is equivalent to~$v\bar{v}=1$, that is $v$ is a torsion unit, see~\cite[Proposition~7.1]{Neukirch99}.
\end{enumerate}
\end{proof}
Note that such an ideal theoretic description was already used in 
\cite[Section~4]{how04} for simple abelian surfaces.

Recall that given an abelian variety $A$, a polarization $a$ is said to be \emph{decomposable} if there exist two polarized abelian varieties $(B_1,b_1)$ and $(B_2,b_2)$ and an isomorphism $\psi\colon A\to B_1\times B_2$ such that $a=\psi^\vee\circ (b_1\times b_2) \circ \psi$.
\begin{cor}
\label{cor:jacobians1}
   Let $A$ be an abelian variety in $\AV(h)$ and put $I=\Eq(A)$. Assume that $A$ admits a principal polarization $\lambda$.
   Then $(I:I)$ is a product of orders if and only if $(A,\lambda)$ is decomposable and hence it is not (geometrically) isomorphic to the Jacobian of a curve.
   In particular, if $R$ is a product of orders, then the isogeny class associated to $h$ does not contain a Jacobian.
\end{cor}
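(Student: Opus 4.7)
The plan is to reduce to Corollary \ref{cor:prod}\ref{c:cor:prod} and then check that a principal polarization also decomposes whenever $(I:I)$ does. First I would note that by Corollary \ref{cor:prod}\ref{c:cor:prod}, a decomposition $(I:I)=S_1\oplus S_2$ as rings implies $A\simeq A_1\times A_2$ for abelian varieties $A_i$ with $\Eq(A_i)=I_i=e_iI$, where $e_1,e_2$ are the idempotents of $(I:I)$. The converse is the easy direction of that same statement.

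The substantive point is that when $(I:I)$ splits, so does the principal polarization $\lambda$. Write $K=K_1\oplus K_2$ with $K_i=e_iK$. Since the idempotents $e_1,e_2$ of $(I:I)\subseteq K$ correspond to a grouping of the CM-field factors of $K$, and the CM-involution $\alpha\mapsto q/\alpha$ preserves each such factor, I have $\bar e_i=e_i$. From this and the decomposition of the trace pairing along $K=K_1\oplus K_2$ I get $\bar I^t=\bar I_1^t\oplus \bar I_2^t$. Writing the principal polarization as $a\in K^\times$ with $aI=\bar I^t$, totally imaginary and $\Phi$-positive (Theorem \ref{thm:idpol}\ref{thm:idpol:a}), I would decompose $a=a_1+a_2$ with $a_i=e_ia\in K_i$; using $\bar e_i=e_i$ each $a_i$ is again totally imaginary, and $\Phi$-positivity is checked factor by factor (embeddings $K\to\C$ factor through one of the $K_i$). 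The equality $aI=\bar I^t$ becomes $a_iI_i=\bar I_i^t$, so each $a_i$ is a principal polarization of $A_i$, and $(A,\lambda)\simeq(A_1,\lambda_1)\times (A_2,\lambda_2)$ as polarized abelian varieties, i.e.\ $(A,\lambda)$ is decomposable. For the reverse implication, decomposability of $(A,\lambda)$ yields a decomposition of $A$ alone, and Corollary \ref{cor:prod}\ref{c:cor:prod} gives back the splitting of $(I:I)$.

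The "hence" clause is a classical fact: a Jacobian of a smooth (irreducible) curve, with its canonical principal polarization, is indecomposable as a principally polarized abelian variety; this persists after base change to $\bar\F_q$. Hence a decomposable $(A,\lambda)$ cannot be geometrically isomorphic to a Jacobian. For the final statement, I would observe that any over-order of $R$ in $K$ must contain the idempotents of $R$ (for any ring, idempotents of a subring remain idempotents in an over-ring, and the orthogonal-idempotent decomposition of $R$ forces its over-orders in $K$ to respect the same splitting of $K$). Therefore, if $R$ itself is already a product of orders, then $(I:I)$ is a product of orders for every fractional $R$-ideal $I$, so no $A\in\AV(h)$ with any principal polarization can be a Jacobian.

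The main obstacle is really only the bookkeeping in the middle paragraph: verifying that $\bar e_i=e_i$ (which is where the CM-algebra hypothesis is used in an essential way) and that totally-imaginary and $\Phi$-positivity pass to each summand. Once those are in place, compatibility of $\Eq$ with the ring-theoretic decomposition, combined with Theorem \ref{thm:idpol}, makes both implications essentially formal.
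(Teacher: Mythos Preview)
Your argument is correct and follows the same route as the paper: reduce the product decomposition of $A$ to Corollary~\ref{cor:prod}\ref{c:cor:prod}, and then use the ideal-theoretic description of polarizations (Theorem~\ref{thm:idpol}) to see that a principal polarization must split along with $(I:I)$. The paper's proof is much terser---it simply says the polarization ``would be decomposable by Theorem~\ref{thm:idpol}''---whereas you have spelled out the verifications ($\bar e_i=e_i$, factorwise $\Phi$-positivity, $a_iI_i=\bar I_i^t$) that make this citation go through; your added detail is sound and does not depart from the paper's strategy.
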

\begin{proof}
   Put $S=(I:I)$.
   Then by Corollary \ref{cor:prod} $S$ is a product if and only if every abelian variety with endomorphism ring $S$ is isomorphic to a product of abelian varieties. In particular, if any one of them admits a principal polarization, this would be decomposable by Theorem \ref{thm:idpol} 
   and hence cannot be isomorphic (as a polarized abelian variety) to the Jacobian of a curve.
\end{proof}
\begin{remark}
We cannot state a result analogous to Corollary \ref{cor:jacobians1} without assuming that $h$ is squarefree.
\end{remark}

\section{ Algorithms }
\label{sec:algorithms}
The algorithms in this section have been implemented in Magma \cite{Magma} and the code is abailable on the author's webpage.
We will use without mentioning a lot of algorithms for abelian groups, which can all be found in \cite[Section 2.4]{cohen93}.

\begin{algorithm}
% \SetAlgoLined
 \KwIn{$h$ a square-free ordinary polynomial in $\Wpoly{q}$ or a square-free polynomial in $\Wpoly{p}$ with no real roots;}
 \KwOut{a list of fractional ideals representing the isomorphism classes of the abelian varieties in the isogeny class determined by $h$;}
%  initialization\;
 $K:=\Q[x]/(h)$\;
 $F:= x \mod (h)$\;
 $V:=qF^{-1}$\;
 $R:=\Z[F,V]$\;
 \KwRet{ $\ICM(R)$ }\;
 \caption{\label{alg:isoclass} Isomorphism classes in a given isogeny class}
\end{algorithm}
\begin{theorem}
 Algorithm \ref{alg:isoclass} is correct.
\end{theorem}
\begin{proof}
 The correctness follows from Theorem \ref{thm:isomclass}.
\end{proof}
\begin{remark}
 In \cite{MarICM18} we describe in detail how to compute $\ICM(R)$ for any order~$R$ in a finite product of number fields $K$.
\end{remark}

\begin{algorithm}[H]
% \SetAlgoLined
  \KwIn{$h$ a square-free ordinary polynomial in $\Wpoly{q}$;}
  \KwOut{a CM-type $\Phi$ as in (\ref{eq:cmtype});}
%  initialization\;
  $\text{write }h=\prod_{i=1}^{r} h_i\text{ with }h_i\text{ irreducible}$\;
  $\Q(F):=\Q[x]/(h)$\;
  $M:=\text{SplittingField}(h)$\;
  $\mathfrak{P}:=\text{a maximal ideal of $M$ above $p$}$\;
  $\psi_0:=\text{a homomorphism $M\to \C$}$\;
  \For{$i=1\dots r$}{
      $d_i:=\deg(h_i)$\;
      $\Q(F_i):=\Q[x]/(h_i)$\;
      let $F_{i,1},\ldots,F_{i,{d_i}}$ be the conjugates of $F_i$ in $M$\;
  }
  $\Phi:=\{\ \}$\;
  \For{$\vphi \in \Hom(\Q(F),\C)$}{
	\If{  $\vphi(F)=(\psi_0(F_{1,{j_1}})\times\ldots\times\psi_0(F_{r,{j_r}}))$ with $F_{1,{j_1}},\ldots,F_{r,{j_r}}\in \mathfrak{P}$}{
		add $\vphi$ to $\Phi$\;
	}
  }
  \KwRet{ $\Phi$ }\;
  \caption{\label{alg:cmtype} CM-type}
\end{algorithm}

\begin{theorem}
 Algorithm \ref{alg:cmtype} is correct.
\end{theorem}
\begin{proof}
 Use the notation as in the Algorithm.
 Fixing an embedding of $\varepsilon\colon W\to \C$ as in Remark \ref{rmk:Delfun} encompasses fixing prime above $p$ and an embedding for each extension containing the fields $\Q(F_i)$, $i=1\dots r$ in a compatible way.
 Since we need a field containing all the conjugates of $F_i$ for all $i$, the most efficient choice is to work with the compositum $M$ of the Galois closures of the fields $\Q(F_i)$, which is precisely the splitting field of the polynomial $h$.
 Under these identifications, we get
 \begin{align*} 
 \vphi \in \Phi & \iff  v_p( \vphi|_{\Q(F_i)}(F_i) )>0 \text{ for } i=1,\ldots,r  \\
                & \iff \psi_0^{-1}(\vphi|_{\Q(F_i)}(F_i)) \in \mathfrak{P} \text{ for } i=1,\ldots,r.
 \end{align*}
 Since the polynomial $h$ is ordinary, the set $\Phi$ consists of exactly half of the homomorphisms $K\to \C$, one for each conjugate pair.
\end{proof}
% \begin{remark}
%   Observe that there are well known algorithms to compute the splitting field of a rational polynomial, see for example \cite[Section 6.3]{cohen93}.
% \end{remark}

\begin{algorithm}
Let $h$ be a square-free ordinary polynomial in $\Wpoly{q}$\;
Put $K:=\Q[x]/(h)$, $F:= x \mod (h)$, $V:=qF^{-1}$ and $R:=\Z[F,V]$\;
 \KwIn{a fractional $R$-ideal $I$ corresponding to an abelian variety $A$; a positive integer $N$;}
 \KwOut{a sequence $\cP$ of elements of $K^\times$ corresponding to all pairwise non-isomorphic polarizations of $A$ of degree $N$;}
%  initialization\;
 Compute the CM-type $\Phi$ using Algorithm \ref{alg:cmtype}\;
 $S:=(I:I)$\;
 $\mathcal{K}:= \Span{v\bar v : v \in S^\times}$ 
 \tcp*[r]{consider $S^\times$ and $\mathcal{K} $ as subgroups of $ (S\overline{S})^\times$}
 $Q:=S^\times/\mathcal{K} \cap S^\times$\;
 $\cQ:=\set{ \text{representatives in $S^\times$ of the elements of $Q$} }$\;
 $\cS':=\set{ \text{subgroups $H$ of $\bar{I}^t$ such that }[\bar{I}^t:H]=N } $\;
 $\cS:=\set{ H \in \cS' : H \text{ is an $R$-module with multiplicator ring }S }$\;
 $\cP:=\{\ \}$\;
 \For{$H\in \cS$}{
      \If{$(H:I)=x_0S$}{
      $\cP_H:=\{\ \}$\;
	  \For{$u \in \cQ$}{
	    $y:=x_0u$\;
	    \If{$\bar y = -y$ and $y$ is $\Phi$-positive}{
	    Append $y$ to $\cP_H$\;
	    }
	  }      
      }
 }
 $\cP':= \bigcup_{H \in \cS} \cP_H$\;
 \For{$\lambda\in \cP'$}{
      \If{there is no $\lambda' \in \cP$ such that $\lambda/\lambda' \in \mathcal{K}$ }{
	    Append $\lambda$ to $\cP$\;
	  }     
 }
 \KwRet{$\cP$}\;
 \caption{\label{alg:pol} Polarizations of a given abelian variety}
\end{algorithm}
\begin{theorem}
 Algorithm \ref{alg:pol} is correct.
\end{theorem}
\begin{proof}
% The correctness of Algorithm \ref{alg:pol} follows from part \ref{thm:idpol:a} of Theorem \ref{thm:idpol}. In order to prove that the algorithm terminates, it suffices to show that the quotient $Q$ is finite. 
%  Write $K=\prod_i K_i$, with $K_i$ a number field.
% By the Dirichlet Unit Theorem we have that $S^\times$ is a finitely generated group. which can be written as
% \[ S^\times = T \times \prod_j \Span{\zeta_j}\]
% where $T$ consists of torsion units and the $\zeta_j$'s have infinite order. In particular $T$ is finite.
% For each unit $u\in S^\times$ and homomorphism $\vphi: K \to \C$ we have that $\vphi(u/\bar u)$ lies on the unit circle.
% Hence by \cite[Proposition 7.1]{Neukirch99} the quotient $u/\bar u$ is a torsion unit of $(S\bar{S})^*$.
% In particular $\bar\zeta_j=\mu \zeta_j$ for some torsion unit $\mu$.
% Since the subgroup $\Span{v\bar v : v \in S^\times }$ is generated by elements $\zeta_j\bar\zeta_j$ and $\mu\bar\mu$, with $\mu$ a torsion unit, we get that the quotient $Q$ is finite.

 For each unit $u\in S^\times$ and homomorphism $\vphi: K \to \C$ we have that $\vphi(u/\bar u)$ lies on the unit circle.
 Hence by \cite[Proposition 7.1]{Neukirch99} the quotient $\zeta=u/\bar u$ has finite multiplicative order, say $n$.
 Then $u^{2n} = u^n(u\zeta)^n = u^n\bar{u}^n = (u\bar{u})^n $.
 In particular the abelian group $Q$ is torsion. 
 By the Dirichlet Unit Theorem the unit group $S^\times$ is a finitely generated abelian group, and therefore it follows that $Q$ is finite.
 Observe that given two fractional $R$-ideals $H$ and $I$, they are isomorphic if and only if they have the same multiplicator ring and $(H:I)$ is a principal ideal, see \cite[Proposition 4.1.(c), Corollary 4.5]{MarICM18}.
 Note that once we know that $(H:I)$ is invertible in $S$, checking whether it is a principal ideal it is a finite problem and can be done efficiently if have already computed $\Pic(S)$, see for example~\cite[6.5.5]{cohen93}.
 For each $H \in \cS$ with $x_0I=H$, the set $\cP_H$ contains all polarizations of~$I$ with image $H$ up to isomorphism by Theorem~\ref{thm:idpol}.
 So in particular the set $\cP'$ contains all polarizations of $I$ of degree $N$.
 By Part \ref{thm:idpol:b} of Theorem \ref{thm:idpol} $\lambda$ and $\lambda'$ in $\cP'$ are isomorphic if and only if $\lambda/\lambda'$ is in $\mathcal{K}$.
 This concludes the proof of the correctness of Algorithm~\ref{alg:pol}.
\end{proof}
\begin{remark}
Observe that Algorithm~\ref{alg:pol} can be simplified if $S^\times = \bar{S^\times}$. 
In this is the case, the set $\cP$ coincides with $\cP'$, that is, we can skip the last loop.
Indeed assume that $\lambda$ and $\lambda'$ are in $\cP'$, and satisfy $\lambda I=H$ and  $\lambda' I=H'$. 
If $\lambda = v\bar{v}\lambda'$ for some $v\in S^\times = \bar{S^\times}$ then $H = v\bar{v}H' = H'$, and hence $\lambda$ and $\lambda'$ are both in $\cP_H$ which implies that they must coincide, since $\cP_H$ contains only one representative for each isomorphism class.
This observation is particularly useful when we compute principal polarizations, because if $I$ admits a principally polarization then $S = \bar{S}$.
\end{remark}
\begin{algorithm}
 \KwIn{a pair $(I,x)$ corresponding to a polarized abelian variety $(A,\mu)$;}
 \KwOut{a finite abelian group $H$ corresponding to $\Aut((A,\mu))$;}
 $S:=(I:I)$\;
 $H:=\text{torsion}(S^\times)$\;
 \KwRet{$H$}\;
 \caption{\label{alg:autom} Automorphism of a polarized abelian variety}
\end{algorithm}
\begin{theorem}
 Algorithm \ref{alg:autom} is correct.
\end{theorem}
\begin{proof}
 It follows from Part \ref{thm:idpol:b} of Theorem \ref{thm:idpol}.
\end{proof}

\section{ Examples }
\label{sec:examples}
\subsection*{Elliptic curves}
\allowdisplaybreaks
Every elliptic curve $E$ comes with a unique principal polarization. This means that counting the isomorphism classes of elliptic curves over $\F_q$ is the same as counting the principally polarized ones.
The characteristic polynomial of the Frobenius endomorphism of an elliptic curve over $\F_q$ has the form $x^2+\beta x + q$ with $\abs{\beta}\leq 2\sqrt{q}$ by Hasse's Theorem.
Not every $\beta$ in this range gives rise to an isogeny class of an elliptic curve.
See \cite[Theorem 4.1]{Wat69} for a complete list.
% An elliptic curve that is not ordinary, that is, when $p$ divides $\beta$, is called \emph{supersingular}. This is equivalent to having a non-commutative endomorphism ring over $\overline\F_q$, see \cite[p.246]{Deu41}.
Let $N_q(\beta)$ be the number of isomorphism classes of elliptic curves over $\F_q$ in the isogeny class determined by the characteristic polynomial $h=x^2+\beta x +q$ weighted with the reciprocal of the number of automorphisms over $\F_q$.
As a consequence of Corollary \ref{cor:isomclass} we get the following Proposition, which reproves a well known result by Deuring and Waterhouse. See \cite{Wat69}, \cite{Deu41} and also \cite[Theorem 4.6]{Schoof87}.
\begin{prop}
\label{prop:ordinaryellcurves}
 Let $q=p^r$, where $p$ is a prime number and $r$ is a positive integer. Let $\beta$ be an integer satisfying $\beta^2<4q$.
 If $r>1$ assume also that $\beta$ is coprime with $p$.
 Then
 \[ N_q(\beta)=\frac{\#\Pic(\cO_K)}{\cO_K^\times}\sum_{n|f}n\prod_{p|n}\left( 1-\left( \frac{\Delta_K}{p} \right)\frac{1}{p} \right), \]
 where $K=\Q[x]/(h)$, $R=\Z[x]/(h)$ and $f=[\cO_K:R]$ with $h=x^2+\beta x +q$.
\end{prop}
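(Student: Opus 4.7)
The plan is to invoke Corollary \ref{cor:isomclass} to translate the weighted count into a sum over $\ICM(R)$, decompose that sum according to multiplicator ring, and then apply the classical class number formula for non-maximal imaginary quadratic orders.

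For the setup: since $\beta^2<4q$, the polynomial $h=x^2+\beta x+q$ is irreducible and $K=\Q[x]/(h)$ is imaginary quadratic. The relation $h(F)=0$ gives $V=q/F=-F-\beta\in\Z[F]$, so $R=\Z[F,V]=\Z[F]=\Z[x]/(h)$. The hypothesis on $\beta$ and $p$ ensures that $h$ is ordinary whenever $r>1$; moreover, $h$ has no real roots since $K$ is imaginary quadratic, covering the remaining case $r=1$. Hence Corollary \ref{cor:isomclass} applies and supplies a bijection between isomorphism classes of elliptic curves in $\AV(h)$ and elements of $\ICM(R)$. By Corollary \ref{cor:prod}(b), the automorphism group of the curve $E_{[I]}$ corresponding to $[I]$ is $(I:I)^\times$, a finite group since $K$ is imaginary quadratic. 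Therefore
\[ N_q(\beta)=\sum_{[I]\in\ICM(R)}\frac{1}{|(I:I)^\times|}. \]

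The next step uses a feature specific to quadratic orders: every such order is monogenic and therefore Gorenstein, so every fractional ideal is invertible in its multiplicator ring. This upgrades the inclusion $\ICM(R)\supseteq\bigsqcup_S\Pic(S)$ from Section \ref{sec:orders} to an equality, where $S$ runs over the over-orders of $R$. These over-orders are the $S_c:=\Z+c\cO_K$ indexed by divisors $c$ of the conductor $f=[\cO_K:R]$, with $S_f=R$ and $S_1=\cO_K$. Grouping the sum by multiplicator ring gives
\[ N_q(\beta)=\sum_{c\mid f}\frac{|\Pic(S_c)|}{|S_c^\times|}. \]

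To conclude, substitute the classical Dedekind formula
\[ |\Pic(S_c)|=\frac{c\,|\Pic(\cO_K)|}{[\cO_K^\times:S_c^\times]}\prod_{p\mid c}\left(1-\left(\frac{\Delta_K}{p}\right)\frac{1}{p}\right) \]
and use $[\cO_K^\times:S_c^\times]\cdot|S_c^\times|=|\cO_K^\times|$ (valid since $\cO_K^\times$ is finite) to factor $|\Pic(\cO_K)|/|\cO_K^\times|$ out of the sum; relabeling $c$ as $n$ yields the desired identity. The main obstacle is the promotion of the inclusion $\ICM(R)\supseteq\bigsqcup_S\Pic(S)$ to equality, which is special to the quadratic (elliptic) case and fails in higher dimension; everything else is a direct translation through the ideal-theoretic dictionary of Section \ref{sec:squarefreecase} combined with a classical counting formula.
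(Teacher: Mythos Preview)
Your proof is correct and follows essentially the same route as the paper's: reduce to $\ICM(R)$ via Corollary \ref{cor:isomclass} and Corollary \ref{cor:prod}\ref{b:cor:prod}, use the fact that in the quadratic case $\ICM(R)=\bigsqcup_S\Pic(S)$, enumerate the over-orders by divisors of the conductor, and plug in the classical class number formula for imaginary quadratic orders. The only cosmetic difference is that the paper phrases the key structural input as ``every quadratic order is Bass'' (citing \cite[Proposition~3.7]{MarICM18}), whereas you argue that every quadratic order is monogenic hence Gorenstein, and then implicitly use that the multiplicator ring of any fractional $R$-ideal is again a quadratic order; since Bass is precisely ``every over-order is Gorenstein,'' the two justifications are equivalent.
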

\begin{proof}
 The assumptions on $\beta$ mean that, for an elliptic curve $E$ in the isogeny class determined by $\beta$, we have $E \in \AVord{q}$ or $E \in \AVcs{p}$, since there is no characteristic polynomial of an elliptic curve over $\F_p$ with root $\sqrt{p}$.
 Observe that if we write $R=\Z[\alpha]$ then $q/\alpha$ is in $R$.
 In a quadratic field every order is Bass and hence by \cite[Proposition 3.7]{MarICM18} we have
 \[ \ICM(R) = \bigsqcup_{R\subseteq S \subseteq \cO_K} \Pic(S). \]
 Therefore, by Corollary \ref{cor:isomclass} and Corollary \ref{cor:prod}.\ref{b:cor:prod}, we obtain
 \[ N_q(\beta) =\sum_{R\subseteq S \subseteq \cO_K} \dfrac{\#\Pic(S)}{\#S^\times}.\]
 Since $K$ is a quadratic field we know that each order $S$ is uniquely determined by its index $[\cO_K:S]$ and these are precisely the divisors of $f$.
 To conclude we just need to observe that if $[\cO_K:S]=n$ then we have that 
 \[\# \Pic(S) = \frac{\#\Pic(\cO_K)}{[\cO_K^\times:S^\times]}n\prod_{p|n}\left( 1-\left( \frac{\Delta_K}{p} \right)\frac{1}{p} \right),\]
 where $\left(\frac{\cdot}{p}\right)$ is the Legendre symbol for $p$ odd and the Kronecker symbol for $p=2$, see \cite[Theorem 7.24]{cox13}.
\end{proof}

\subsection*{Higher dimension}

Here we present some examples in dimension greater than~$1$.
The code to recompute them is available at \url{https://raw.githubusercontent.com/stmar89/AbVarFq/master/examples/examples_pol_sqfree_abvar.txt}.

\begin{example}
    Consider the polynomial $h=x^4 + 2x^3 - 7x^2 + 22x + 121$.
    By \cite[Theorem 1.3]{Howe95} we know that the corresponding isogeny class of simple abelian surfaces over $\F_{11}$ does not contain a principally polarized variety.
%    We now list the isomorphism classes and for each class the smallest degree $N$ for which there is at least one polarization of degree $N$, then all non-isomorphic polarizations of degree $N$ and the automorphisms of the polarized abelian variety.
    Put $K=\Q[x]/h=\Q(\alpha)$.  
%   Note that $\alpha=\zeta(1-\sqrt{10})$, where $\zeta=(-1+\sqrt{3})/2$. 
    Let $R$ be the order $\Z[\alpha,11/\alpha]$. 
    The only proper over-order of $R$ is the maximal order $\cO_K$.
    Since both orders are Gorenstein, the isomorphism classes of the abelian varieties in the isogeny class determined by $h$ functorially correspond to 
    \[ \Pic(R) \sqcup \Pic(\cO_K).\]
    Moreover we have $\Pic(R)\simeq \Z/2\Z\times \Z/2\Z$ and $\Pic(\cO_K)\simeq \Z/2\Z$, so in particular we have $6$ isomorphism classes of abelian varieties. 
    Two of the $4$ isomorphism classes with endomorphism ring $R$ have $2$ non-isomorphic polarizations of degree $4$ while the other $2$ have $2$ non-isomorphic polarizations of degree $25$.
    One of the isomorphism classes with endomorphism ring $\cO_K$ has $2$ non-isomorphic polarizations of degree $4$ while the other has $2$ non-isomorphic polarizations of degree $25$.
    The degrees mentioned above are minimal, in the sense that the isomorphism class does not admit a polarization of smaller degree. 
    All the above polarized varieties have automorphism groups of order $2$.
\end{example}

\begin{example}
 Let $h=x^6 - 2x^5 - 3x^4 + 24x^3 - 15x^2 - 50x + 125$. This is the characteristic polynomial of an isogeny class of simple abelian varieties over $\F_{5}$ of dimension $3$. 
 Put $K=\Q(\alpha)=\Q[x]/h$ and denote by $R$ the order $\Z[\alpha,5/\alpha]$.
 There are $5$ over-orders of $R$, all stable under complex conjugation.
 One of the over orders is not Gorenstein. We denote this order by $S$.
 Moreover denote by $T$ the unique over-order of $R$ such that $[\cO_K:T]=2$ and the group of torsion units is $\mu(T^\times)\simeq \Z/2\Z$.
% :
% {\scriptsize \begin{align*}
%  S_1 = & \Z \oplus  \alpha\Z \oplus  \alpha^2\Z \oplus  \alpha^3\Z \oplus \frac15(\alpha^4 + 3\alpha^3 + 2\alpha^2 + 4\alpha)\Z\oplus \frac{1}{25}(\alpha^5 + 3\alpha^4 + 12\alpha^3 + 9\alpha^2 + 5\alpha)\Z\\
%  S_2 = & \Z \oplus  \alpha\Z \oplus  \alpha^2\Z \oplus  \alpha^3\Z \oplus  \frac15(\alpha^4 + 3\alpha^3 + 2\alpha^2 + 4\alpha)\Z \oplus \frac{1}{50}(\alpha^5 + 3\alpha^4 + 12\alpha^3 + 34\alpha^2 + 5\alpha + 25)\Z\\
%  S_3 = & \Z \oplus  \alpha\Z \oplus  \alpha^2\Z \oplus  \alpha^3\Z \oplus \frac{1}{10}(\alpha^4 + 8\alpha^3 + 2\alpha^2 + 4\alpha + 5)\Z \oplus\frac{1}{50}(\alpha^5 + 3\alpha^4 + 12\alpha^3 + 34\alpha^2 + 5\alpha + 25)\Z\\
%  S_4 = & \Z \oplus  \alpha\Z \oplus  \alpha^2\Z \oplus  \alpha^3\Z \oplus  \frac{1}{5}(\alpha^4 + 3\alpha^3 + 2\alpha^2 + 4\alpha)\Z \oplus \frac{1}{100}(\alpha^5 + 3\alpha^4 + 12\alpha^3 + 84\alpha^2 + 5\alpha + 75)\Z\\
%  S_5 = & \Z \oplus  \alpha\Z \oplus  \alpha^2\Z \oplus  \alpha^3\Z \oplus \frac{1}{10}(\alpha^4 + 8\alpha^3 + 2\alpha^2 + 4\alpha + 5)\Z\oplus\frac{1}{100}(\alpha^5 + 3\alpha^4 + 12\alpha^3 + 84\alpha^2 + 5\alpha + 75)\Z
% \end{align*}}
%Observe that $S_2$ is not Gorenstein.

The $\ICM(R)$ consists of $14$ classes, so there are $14$ isomorphism classes of abelian threefolds over $\F_{5}$ with characteristic polynomial $h$.
Among these $14$ classes $2$ are not invertible in their multiplicator ring $S$.

We compute that $8$ isomorphism classes are principally polarized. 
They are all invertible in their multiplicator rings.
More precisely, they correspond to isomorphism classes in
\[ \Pic(R) \sqcup \Pic(T) \sqcup \Pic(\cO_K) \]
and all admit a unique principal polarization up to isomorphism.
The polarized isomorphism classes with endomorphism ring $R$ and $T$ have $2$ automorphisms and the one with maximal endomorphism ring have automorphism group $\Z/4\Z$.

\end{example}

\begin{example}
 Let 
 \[h=x^8 - 5x^7 + 13x^6 - 25x^5 + 44x^4 - 75x^3 + 117x^2 - 135x + 81.\]
 This is the characteristic polynomial of an isogeny class of simple abelian varieties over $\F_{3}$ of dimension $4$.
 Let $K=\Q(\alpha)=\Q[x]/h$ and denote by $R$ the order $\Z[\alpha,3/\alpha]$.
 There are $8$ over-orders of $R$.

The $\ICM(R)$ consists of $18$ classes, so there are $18$ isomorphism classes of abelian fourfolds over $\F_{3}$ with characteristic polynomial $h$.
Among these $18$ classes, $5$ are not invertible in their multiplicator rings.
It turns out that $10$ out of the $18$ ideal classes are isomorphic to the class of the conjugate of the trace dual ideal and $2$ of them are non-invertible. This means that the corresponding abelian varieties are isomorphic to their dual. 
Not all of them are principally polarized.

There are $8$ isomorphism classes which are principally polarized, all admitting a unique principal polarization up to isomorphism. The ideals corresponding to $2$ of them are not invertible in their multiplicator ring. All the principal polarized abelian varieties have automorphism group of order $2$, but the ones with maximal endomorphism ring which have $10$ automorphisms.

%\textcolor{red}{gppts}\\
 We also notice that in this example there are abelian varieties with the same endomorphism ring, but non-isomorphic groups of rational points.

\end{example}

\begin{example}
  In the following table we present the results of our computations of the isomorphism classes of all ordinary square-free isogeny classes of abelian surface over $\F_p$ for $p=2,3,5,7$ and $11$.
  We will use the following notation:
  \begin{itemize}
  \item $N_1$: ordinary square-free isogeny classes over $\F_p$,
  \item $N_2$: isomorphism classes of abelian varieties,
  \item $N_3$: isomorphism classes of abelian varieties which do not admit a principal polarization, 
  \item $N_4$: polarized isomorphism classes of principally polarized abelian varieties, 
  \item $N_5$: isomorphism classes of abelian varieties with maximal endomorphism ring,
  \item $N_6$: isomorphism classes of abelian varieties with maximal endomorphism ring which do not admit a principal polarization.
  \end{itemize}
\begin{center}
\begin{tabular}{| c | c | c | c | c | c | c |}
\hline
    $p$  & $N_1$ 	  & $N_2$ 		& $N_3$		& $N_4$	& $N_5$ 	& $N_6$ \\\hline
    $2$  & $14$            & $21$	  	& $7$ 	       	& $15$	& $15$	& $3$ \\\hline     
    $3$  & $36$            & $76$	  	& $23$ 	       	& $59$	& $43$	& $6$\\\hline
    $5$  & $94$            & $457$	  	& $203$ 	       	& $290$	& $159$	& $34$\\\hline
    $7$  & $168$          & $1324$  	& $636$ 	       	& $797$	& $387$	& $88$\\\hline
    $11$ & $352$         & $4925$  	& $2675$ 	       	& $2797$	& $1476$ & $459$\\\hline
\end{tabular}
\end{center}
  We remark that the proportion of abelian surfaces that do not admit a principal polarization is much lower when we restrict ourselves to the surfaces with endomorphism ring which is the maximal order of the endomorphism algebra.
  We have on-going computations for $g=3$ that show that this difference becomes much more pronounced.
\end{example}

\section{ Period Matrices }
\label{sec:periodmatrices}
Let $A$ be an abelian variety in $\AV(h)$ for a square-free ordinary polynomial $h$ in $\Wpoly{q}$ of degree $2g$ and $I$ be the corresponding fractional $R$-ideal, where $R=\Z[F,V]$ as usual.
Let $A'$ be the complex abelian variety $\tilde A \otimes_\varepsilon \C$ as in Remark \ref{rmk:Delfun}. Recall that $I=H_1(A',\Z)$ as abelian groups and choose a $\Z$-basis of $I$, say
\[ I=\alpha_1\Z\oplus\ldots\oplus\alpha_{2g}\Z. \]
Assume also that $A$ admits a principal polarization $\lambda$, which corresponds to multiplication by an element $a$ in $K^\times$.
Denote with $\lambda'$ the polarization induced by $\lambda$ on $A'$.
Let $\Phi=\set{\vphi_1,\ldots,\vphi_g}$ be the CM-type found by Algorithm \ref{alg:cmtype}.
Recall by \cite[Section 8]{Del69} that this particular CM-type characterizes the complex structure on $I\otimes \R$ induced by the identification with the lie algebra of the complex abelian variety $A'$, via the isomorphism of complex tori
\[ A'(\C) \simeq \dfrac{\C^g}{\Phi(I)},\]
where $\Phi(I)$ is the lattice in $\C^g$ spanned by the complex vectors
\[ (\vphi_1(\alpha_i),\ldots,\vphi_g(\alpha_i))\qquad i=1,\ldots,2g. \]
A \emph{period matrix} associated to $A'$ is a $g\times 2g$ complex matrix whose columns are the coordinates of a $\Z$-basis of the full lattice $\Phi(I)$. We are interested in a matrix that captures the Riemann form induced by the polarization $\lambda'$ of $A'$.

More precisely, as in the proof of Theorem \ref{thm:idpol} we obtain that the Riemann form associated to $a$ is given by
\[ b\colon I\times I \to \Z \quad (s,t)\mapsto \Tr(\bar{ta}s). \]
We can choose now a symplectic $\Z$-basis of $I$ with respect to the form $b$, that is,
\[ I = \gamma_1 \Z \oplus \ldots \oplus \gamma_g \Z \oplus \beta_1 \Z \oplus \ldots \oplus \beta_g \Z, \]
and
\[ b(\gamma_i,\beta_i)=1 \text{ for all $i$, and}\]
\[b(\gamma_h,\gamma_k)=b(\beta_h,\beta_k)=b(\gamma_h,\beta_k)=0 \text{ for all $h\neq k$}. \]
Such symplectic basis can be computed with appropriate modifications of the Gram-Schmidt orthogonalization process, see for example \cite[Theorem 1.1]{daSilva01}.

Consider the $g\times 2g$ matrix $\Omega$ whose $i$-th row is 
\[(\vphi_i(\gamma_1),\ldots,\vphi_i(\gamma_g),\vphi_i(\beta_1),\ldots,\vphi_i(\beta_g)).\]
This is what is usually called the \emph{big period matrix} of $(A',\lambda')$. If we write $\Omega=(\Omega_1,\Omega_2)$ we can recover the $g\times g$ \emph{small period matrix} or \emph{Riemann matrix} $\tau$ by
\[\tau= \Omega_2^{-1}\Omega_1.\]

\begin{example}
   Let $f=(x^4 - 4x^3 + 8x^2 - 12x + 9)(x^4 - 2x^3 + 2x^2 - 6x + 9)$, which identifies an isogeny class of abelian four-folds over $\F_3$.
   We compute the principally polarized abelian varieties and we find that $4$ isomorphism classes admit a unique principal polarization.
   Here we present one of them with the corresponding (approximations of the) big and small period matrices.
 {\scriptsize\begin{align*}
    \begin{split}
       I & =  \dfrac{1}{54}\left(432-549\alpha+441\alpha^2-331\alpha^3+186\alpha^4-81\alpha^5+33\alpha^6-7\alpha^7\right)\Z\oplus\\
	   & \oplus\dfrac{1}{6}\left(63-78\alpha+65\alpha^2-49\alpha^3+27\alpha^4-12\alpha^5+5\alpha^6-1\alpha^7\right)\Z\oplus\\
           & \oplus\dfrac{1}{6}\left(81-99\alpha+84\alpha^2-61\alpha^3+33\alpha^4-15\alpha^5+6\alpha^6-1\alpha^7\right)\Z\oplus\\
           & \oplus\dfrac{1}{18}\left(-63+96\alpha-86\alpha^2+68\alpha^3-39\alpha^4+18\alpha^5-8\alpha^6+2\alpha^7\right)\Z\oplus(-1)\Z\oplus\\
	   & \oplus(-\alpha)\Z\oplus(-\alpha^2)\Z\oplus\dfrac{1}{9}\left(81-96\alpha+81\alpha^2-64\alpha^3+33\alpha^4-15\alpha^5+6\alpha^6-\alpha^7\right)\Z
    \end{split}\\
    \begin{split}
       \End(I) & = \dfrac{1}{54}\left(432-549\alpha+441\alpha^2-331\alpha^3+186\alpha^4-81\alpha^5+33\alpha^6-7\alpha^7\right)\Z\oplus\\
		 & \oplus \dfrac{1}{6}\left(63-78\alpha+65\alpha^2-49\alpha^3+27\alpha^4-12\alpha^5+5\alpha^6-1\alpha^7\right)\Z\oplus\\
		 & \oplus \dfrac{1}{6}\left(81-99\alpha+84\alpha^2-61\alpha^3+33\alpha^4-15\alpha^5+6\alpha^6-1\alpha^7\right)\Z\oplus\\
		 & \oplus \dfrac{1}{18}\left(-63+96\alpha-86\alpha^2+68\alpha^3-39\alpha^4+18\alpha^5-8\alpha^6+2\alpha^7\right)\Z\oplus\\
		 & \oplus \dfrac{1}{54}\left(-378+549\alpha-441\alpha^2+331\alpha^3-186\alpha^4+81\alpha^5-33\alpha^6+7\alpha^7\right)\Z\oplus\\
		 & \oplus \dfrac{1}{6}\left(-63+84\alpha-65\alpha^2+49\alpha^3-27\alpha^4+12\alpha^5-5\alpha^6+\alpha^7\right)\Z\oplus\\
		 & \oplus \dfrac{1}{6}\left(-81+99\alpha-78\alpha^2+61\alpha^3-33\alpha^4+15\alpha^5-6\alpha^6+\alpha^7\right)\Z\oplus\\
		 & \oplus \dfrac{1}{18}\left(-99+96\alpha-76\alpha^2+60\alpha^3-27\alpha^4+12\alpha^5-4\alpha^6\right)\Z
    \end{split}\\
    x & = \dfrac{537}{80} -\dfrac{1343}{120}\alpha +\dfrac{1343}{144}\alpha^2 -\dfrac{419}{60}\alpha^3 +\dfrac{337}{80}\alpha^4 -\dfrac{15}{8}\alpha^5 +\dfrac{559}{720}\alpha^6 -\dfrac{1}{5}\alpha^7
    \end{align*}

    \begin{align*}
    & \Omega =
    \begin{pmatrix}
      2.8 - i & -2.8 + 0.59i & 0 & 0 & 1 & 1.7 - 0.29i & 0 & 0 \\
      -2.8 + i & 2.8 - 3.4i & 0 & 0 & 1 & 0.29 + 1.7i & 0 & 0 \\
      0 & 0 & -1 & -0.38 - 0.15i & 0 & 0 & -1.6 - 0.62i & -0.15 - 0.15i\\
      0 & 0 & -1 & -2.6 + 6.9i & 0 & 0 & 0.62 - 1.6i & -6.9 + 6.9i
    \end{pmatrix},\\
    & \tau =
    \begin{pmatrix}
    -1 - 2.8i & 2 + 1.4i & 0 & 0 \\
    2 + 1.4i & -2.7 - 0.95i & 0 & 0 \\
    0 & 0 & 0.52 - 0.21i & 0.14 \\
    0 & 0 & 0.14 & 0.71 - 0.31i
    \end{pmatrix}
    \end{align*}}  
\end{example}

%%%%%%%%%%%%%%%%%%%%%%%%%%%%%%%%%%%%%%%%%%%%%%%%%%%%%%%%%%%%%%%%%%%%%%%%%%%%%%%%%%%%%%%%%%%%%%%%%%%%%%%%%%%%%%%%%%%%%%
\bibliographystyle{amsalpha}
%\renewcommand{\bibname}{References} % changes the header from Bibliography to References
%\bibliography{references} % adjust this to fit your BibTex file

\begin{thebibliography}{{LMF}13}

\bibitem[AG17]{AchterGordon17}
Jeffrey~D. Achter and Julia Gordon, \emph{Elliptic curves, random matrices and
  orbital integrals}, Pacific J. Math. \textbf{286} (2017), no.~1, 1--24, With
  an appendix by S. Ali Altu\u g.
%   \MR{3582398}

\bibitem[AW15]{AchterWilliams15}
Jeffrey Achter and Cassandra Williams, \emph{Local heuristics and an exact
  formula for abelian surfaces over finite fields}, Canad. Math. Bull.
  \textbf{58} (2015), no.~4, 673--691.
%   \MR{3415659}

\bibitem[BCP97]{Magma}
Wieb Bosma, John Cannon, and Catherine Playoust, \emph{The {M}agma algebra
  system. {I}. {T}he user language}, J. Symbolic Comput. \textbf{24} (1997),
  no.~3-4, 235--265.
%   \MR{MR1484478}

\bibitem[CdS01]{daSilva01}
Ana Cannas~da Silva, \emph{Lectures on symplectic geometry}, Lecture Notes in
  Mathematics, vol. 1764, Springer-Verlag, Berlin, 2001.
%   \MR{1853077}

\bibitem[Coh93]{cohen93}
Henri Cohen, \emph{A course in computational algebraic number theory}, Graduate
  Texts in Mathematics, vol. 138, Springer-Verlag, Berlin, 1993. 
%  \MR{1228206}

\bibitem[Cox13]{cox13}
David~A. Cox, \emph{Primes of the form {$x^2 + ny^2$}}, second ed., Pure and
  Applied Mathematics (Hoboken), John Wiley \& Sons, Inc., Hoboken, NJ, 2013.
%  \MR{3236783}

\bibitem[CS15]{CentelegheStix15}
Tommaso~Giorgio Centeleghe and Jakob Stix, \emph{Categories of abelian
  varieties over finite fields, {I}: {A}belian varieties over
  {$\mathbb{F}_p$}}, Algebra Number Theory \textbf{9} (2015), no.~1, 225--265.
%  \MR{3317765}

\bibitem[Del69]{Del69}
Pierre Deligne, \emph{Vari\'et\'es ab\'eliennes ordinaires sur un corps fini},
  Invent. Math. \textbf{8} (1969), 238--243.
%   \MR{0254059}

\bibitem[Deu41]{Deu41}
Max Deuring, \emph{Die {T}ypen der {M}ultiplikatorenringe elliptischer
  {F}unktionenk\"orper}, Abh. Math. Sem. Univ. Hamburg \textbf{14} (1941),
  no.~1, 197--272.
%   \MR{3069722}

\bibitem[Hon68]{Honda68}
Taira Honda, \emph{Isogeny classes of abelian varieties over finite fields}, J.
  Math. Soc. Japan \textbf{20} (1968), 83--95.
%   \MR{0229642}

\bibitem[How95]{Howe95}
Everett~W. Howe, \emph{Principally polarized ordinary abelian varieties over
  finite fields}, Trans. Amer. Math. Soc. \textbf{347} (1995), no.~7,
  2361--2401. 
%  \MR{1297531}

\bibitem[How04]{how04}
\bysame, \emph{On the non-existence of certain curves of genus two}, Compos.
  Math. \textbf{140} (2004), no.~3, 581--592. 
%  \MR{2041770}

\bibitem[JKP{\etalchar{+}}18]{JKPRSBT18}
Bruce~W. Jordan, Allan~G. Keeton, Bjorn Poonen, Eric~M. Rains, Nicholas
  Shepherd-Barron, and John~T. Tate, \emph{Abelian varieties isogenous to a
  power of an elliptic curve}, Compos. Math. \textbf{154} (2018), no.~5,
  934--959. 
%  \MR{3798590}

\bibitem[Kan11]{Kani11}
Ernst Kani, \emph{Products of {CM} elliptic curves}, Collect. Math. \textbf{62}
  (2011), no.~3, 297--339. 
%  \MR{2825715}

\bibitem[Knu91]{Knus91}
Max-Albert Knus, \emph{Quadratic and {H}ermitian forms over rings}, Grundlehren
  der Mathematischen Wissenschaften [Fundamental Principles of Mathematical
  Sciences], vol. 294, Springer-Verlag, Berlin, 1991, With a foreword by I.
  Bertuccioni. 
%  \MR{1096299}

\bibitem[Lau02]{Lauter02}
Kristin Lauter, \emph{The maximum or minimum number of rational points on genus
  three curves over finite fields}, Compositio Math. \textbf{134} (2002),
  no.~1, 87--111, With an appendix by Jean-Pierre Serre. 
%  \MR{1931964}

\bibitem[Len96]{Lenstra96}
H.~W. Lenstra, Jr., \emph{Complex multiplication structure of elliptic curves},
  J. Number Theory \textbf{56} (1996), no.~2, 227--241.
%   \MR{1373549}

\bibitem[{LMF}13]{LMFDB}
The {LMFDB Collaboration}, \emph{The l-functions and modular forms database},
  \url{http://www.lmfdb.org}, 2013, [Online; accessed 16 September 2013].

\bibitem[LPP02]{LenstraPilaPomerance02}
H.~W. Lenstra, Jr., J.~Pila, and Carl Pomerance, \emph{A hyperelliptic
  smoothness test. {II}}, Proc. London Math. Soc. (3) \textbf{84} (2002),
  no.~1, 105--146. 
%  \MR{1863397}

\bibitem[Mar16]{Mar16}
Stefano Marseglia, \emph{Isomorphism classes of abelian varieties over finite
  fields}, Stockholm University, 2016.

\bibitem[Mar18a]{MarPhDThesis18}
\bysame, \emph{Computing abelian varieties over finite fields}, Stockholm
  University, 2018.

\bibitem[Mar18b]{MartindaleThesis}
Chloe Martindale, \emph{Isogeny graphs, modular polynomials, and applications},
  Université de Bordeaux,, 2018.

\bibitem[Mar20]{MarICM18}
Stefano Marseglia, \emph{Computing the ideal class monoid of an order}, Journal
  of the London Mathematical Society \textbf{101} (2020), no.~3, 984--1007.

\bibitem[Neu99]{Neukirch99}
J\"urgen Neukirch, \emph{Algebraic number theory}, Grundlehren der
  Mathematischen Wissenschaften [Fundamental Principles of Mathematical
  Sciences], vol. 322, Springer-Verlag, Berlin, 1999, Translated from the 1992
  German original and with a note by Norbert Schappacher, With a foreword by G.
  Harder. 
%  \MR{1697859}

\bibitem[OS]{OswalShankarEarlyView}
Abhishek Oswal and Ananth~N. Shankar, \emph{Almost ordinary abelian varieties
  over finite fields}, Journal of the London Mathematical Society 
  \textbf{101} (2020), no.~3, 923--937.

\bibitem[Ros86]{RosenAbVarC86}
Michael Rosen, \emph{Abelian varieties over {${\bf C}$}}, Arithmetic geometry
  ({S}torrs, {C}onn., 1984), Springer, New York, 1986, pp.~79--101.
%   \MR{861973}

\bibitem[Sch87]{Schoof87}
Ren{\'e} Schoof, \emph{Nonsingular plane cubic curves over finite fields}, J.
  Combin. Theory Ser. A \textbf{46} (1987), no.~2, 183--211.
%   \MR{914657}

\bibitem[ST18]{ShankarTsimerman18}
Ananth~N. Shankar and Jacob Tsimerman, \emph{Unlikely intersections in finite
  characteristic}, Forum Math. Sigma \textbf{6} (2018), e13, 17.
%   \MR{3841493}

\bibitem[Tat66]{Tate66}
John Tate, \emph{Endomorphisms of abelian varieties over finite fields},
  Invent. Math. \textbf{2} (1966), 134--144. 
%  \MR{0206004}

\bibitem[Tat71]{Tate71}
\bysame, \emph{Classes d'isog\'{e}nie des vari\'{e}t\'{e}s ab\'{e}liennes sur
  un corps fini (d'apr\`es {T}. {H}onda)}, S\'{e}minaire {B}ourbaki. {V}ol.
  1968/69: {E}xpos\'{e}s 347--363, Lecture Notes in Math., vol. 175, Springer,
  Berlin, 1971, pp.~Exp. No. 352, 95--110.
%   \MR{3077121}

\bibitem[Wat69]{Wat69}
William~C. Waterhouse, \emph{Abelian varieties over finite fields}, Ann. Sci.
  \'Ecole Norm. Sup. (4) \textbf{2} (1969), 521--560. 
%  \MR{0265369}

\end{thebibliography}

\newcommand{\etalchar}[1]{$^{#1}$}
\def\cprime{$'$}
\providecommand{\bysame}{\leavevmode\hbox to3em{\hrulefill}\thinspace}
\providecommand{\MR}{\relax\ifhmode\unskip\space\fi MR }
% \MRhref is called by the amsart/book/proc definition of \MR.
\providecommand{\MRhref}[2]{%
  \href{http://www.ams.org/mathscinet-getitem?mr=#1}{#2}
}
\providecommand{\href}[2]{#2}

\end{document}